\newtheorem{theorem}{Theorem}[section]
\newtheorem{proposition}[theorem]{Proposition}
\newtheorem{corollary}[theorem]{Corollary}
\theoremstyle{definition}
\newtheorem{definition}[theorem]{Definition}
\theoremstyle{remark}
\newtheorem{remark}[theorem]{Remark}
\newtheorem{example}[theorem]{Example}
\def\F{\mathscr{F}}
\def\G{\mathscr{G}}
\def\H{\mathscr{H}}
\def\PP{\mathcal{P}}
\def\Ps{\mathscr{P}}
\def\U{\mathscr{U}}
\def\V{\mathscr{V}}
\def\W{\mathscr{W}}
\def\X{\mathscr{X}}
\def\BB{\mathfrak{B}}
\def\FF{\mathfrak{F}}
\def\UU{\mathfrak{U}}
\def\E{\mathbb{E}}
\def\N{\mathbb{N}}
\def\R{\mathbb{R}}
\def\Fin{\text{Fin}}
\begin{document}

\title[Idempotent ultrafilters without Zorn's Lemma]
{Idempotent ultrafilters 
\\
without Zorn's Lemma}
\author{Mauro Di Nasso}
\author{Eleftherios Tachtsis}
\address{Dipartimento di Matematica\\
Universit\`a di Pisa, Italy}
\email{mauro.di.nasso@unipi.it}
\address{Department of Mathematics, University of the Aegean, Greece}
\email{ltah@aegean.gr}

\subjclass[2000]
{Primary 03E25, 03E05, 54D80; Secondary 05D10.}

\keywords{Algebra on $\beta\N$, Idempotent ultrafilters, Ultrafilter Theorem}

\begin{abstract}
We introduce the notion of \emph{additive filter}
and present a new proof of the existence of idempotent ultrafilters
on $\N$ without any use of \emph{Zorn's Lemma}, and
where one only assumes the \emph{Ultrafilter Theorem} for 
the \emph{continuum}.
\end{abstract}

\maketitle

\medskip
\section*{Introduction}
Idempotent ultrafilters are a central object in Ramsey
theory of numbers. Over the last forty years they
have been extensively studied in the literature, 
producing a great amount of interesting combinatorial properties 
(see the extensive monography \cite{hs}). 
As reported in \cite{h05}, it all started in 1971 when F. Galvin 
realized that by assuming the existence of ultrafilters on $\N$
that are ``almost translation invariant'', one 
could produce a short proof of a conjecture of 
R. Graham and B. Rothschild, that was to become
a cornerstone of Ramsey theory of numbers:
``For every finite coloring of the natural numbers there
exists an infinite set $X$ such that all finite sums
of distinct elements of $X$ have the same color.''
However, at that time the problem was left open as whether 
such special ultrafilters could exist at all.
In 1972, N. Hindman \cite{h72} showed that the 
\emph{continuum hypothesis} suffices to construct 
those ultrafilters, but their existence in \textsf{ZFC}
remained unresolved. Eventually, in 1974, N. Hindman \cite{h74}
proved Graham-Rothschild conjecture (now known as Hindman's Theorem)
with a long and intricate combinatorial argument that avoided the use of 
ultrafilters. Shortly afterwards, in 1975, S. Glazer observed
that ``almost translation invariant''
ultrafilters are precisely the idempotent elements of the
semigroup $(\beta\N,\oplus)$,
where $\beta\N$ is the Stone-\v{C}ech compactification
of the discrete space $\N$ (which can be identified
with the space of ultrafilters on $\N$)
and where $\oplus$ is a suitable pseudo-sum operation
between ultrafilters. 
The existence of idempotent ultrafilters is then immediate, 
since any compact Hausdorff right-topological semigroup has idempotents,
a well-known fact in semigroup theory
known as \emph{Ellis-Numakura's Lemma}.
The proof of that lemma heavily relies on the axiom
of choice, as it consists in a clever and 
elegant use of \emph{Zorn's Lemma}
jointly with the topological properties of a compact Hausdorff space.
In 1989, T. Papazyan \cite{p} introduced the notion of
``almost translation invariant \emph{filter}'', and proved that the
maximal filters in that class, obtained by applying \emph{Zorn's Lemma},
are necessarily ultrafilters, and hence idempotent ultrafilters.

Despite their central role in a whole area of combinatorics
of numbers, no other proofs are known for the existence of idempotent 
ultrafilters. However, as it often happens with fundamental objects of 
mathematics, alternative proofs seem desirable because they may 
give a better insight and potentially lead to new applications.
It is worth mentioning that generalizations of idempotent 
ultrafilters have been recently considered both in the usual 
set-theoretic context, and in the general framework of model theory:
see \cite{k} where P. Krautzberger thoroughly investigated 
the almost translation invariant filters (appropriately
named ``idempotent filters''), and see \cite{ag}
where U. Andrews and I. Goldbring studied
a model-theoretic notion of idempotent type
and its relationship with Hindman's Theorem.

In this paper we introduce the notion of \emph{additive filter},
which is weaker than the notion of idempotent filter.
By suitably modifying the argument used in 
\emph{Ellis-Numakura's Lemma},
we show that \emph{Zorn's Lemma} is not needed to prove
that every additive filter can be extended to
a maximal additive filter, and that every maximal
additive filter is indeed an idempotent ultrafilter.
Precisely, we will only assume the following restricted form 
of the \emph{Ultrafilter Theorem} (a strictly weaker 
form of the axiom of choice):
``Every filter on $\R$ 
can be extended to an ultrafilter.''

\medskip
\section{Preliminary facts}

Although the notions below could also be considered
on arbitrary sets, here we will focus only on the set of 
natural numbers $\N$. We agree that a natural
number is a positive integer, so $0\not\in\N$. 

Recall that a \emph{filter} $\F$ is a nonempty family of nonempty sets
that is closed under supersets and under (finite) intersections.\footnote
{~More formally, $\F\subseteq\PP(\N)\setminus\{\emptyset\}$ is a filter if the following three properties
are satified: 

(1) $\N\in\F$, (2) $B\supseteq A\in\F\Rightarrow B\in\F$, (3)
$A,B\in\F\Rightarrow A\cap B\in\F$.}
An \emph{ultrafilter} is a filter that is maximal with respect to 
inclusion; equivalently, a filter $\U$ is an ultrafilter
if whenever $A\notin\U$, the complement $A^c\in\U$.
Trivial examples are given by the principal ultrafilters 
$\U_n=\{A\subseteq\N\mid n\in A\}$. Notice that an ultrafilter
is non-principal if and only if it extends the \emph{Fr\'echet filter}
$\{A\subseteq\N\mid A^c \text{ finite}\}$ of
cofinite sets. In the following,
$\F,\G$ will denote filters on $\N$, and $\U,\V,\W$
will denote ultrafilters on $\N$.

Recall that the \emph{Stone-\v{C}ech compactification} $\beta\N$
of the discrete space $\N$ can be identified
with the space of all ultrafilters on $\N$ endowed with the
Hausdorff topology that has the family
$\{\{\U\in\beta\N\mid A\in\U\}\mid A\subseteq\N\}$
as a base of (cl)open sets. (We note here that the above identification
is feasible in $\mathsf{ZF}$, \emph{i.e.}, in the Zermelo--Fraenkel set theory
minus the axiom of choice \textsf{AC}; see \cite[Theorems 14, 15]{hk}.)

The existence of non-principal ultrafilters is established
by the 

\smallskip
\begin{itemize}
\item
\emph{Ultrafilter Theorem} \textsf{UT}:
``For every set $X$, every proper filter on $X$ can be extended to an ultrafilter.''
\end{itemize}

\smallskip
The proof is a direct application of \emph{Zorn's Lemma}.\footnote
{~In the study of weak forms of choice, one usually
considers the equivalent formulation given by
the \emph{Boolean Prime Ideal Theorem} \textsf{BPI}:
``Every nontrivial Boolean algebra has a prime ideal''.
(See \cite{hr} where \textsf{BPI} is Form 14 
and \textsf{UT} is Form 14 A.)}
It is a well-known fact that \textsf{UT} is a strictly weaker 
form of \textsf{AC} (see, \emph{e.g.},
\cite{hr} and references therein). This means that 
one cannot prove \textsf{UT} in 
\textsf{ZF} alone, and that 
\textsf{ZF}+\textsf{UT} does not prove \textsf{AC}.


\begin{definition}
The \emph{pseudo-sum} 
of two filters $\F$ and $\G$
is defined by letting for every set $A\subseteq\N$:
$$A\in\F\oplus\G\ \Longleftrightarrow\ 
\{n\mid A-n\in\G\}\in\F$$
where $A-n=\{m\in\N\mid m+n\in A\}$ is
the rightward shift of $A$ by $n$.
\end{definition}

Notice that if $\U$ and $\V$ are ultrafilters,
then also their pseudo-sum $\U\oplus\V$ is an ultrafilter.
It is verified in a straightforward manner
that the space of ultrafilters $\beta\N$ endowed
with the pseudo-sum operation has the structure
of a \emph{right-topological semigroup}; that is, 
$\oplus$ is associative, and for every ultrafilter $\V$ 
the ``product-on-the-right'' $\U\mapsto\U\oplus\V$ 
is a continuous function on $\beta\N$ (see \cite{hs} for all details).

\begin{definition}
An \emph{idempotent ultrafilter}
is an ultrafilter $\U$ which is idempotent
with respect to the pseudo-sum operation, \emph{i.e.}, $\U=\U\oplus\U$.
\end{definition}

We remark that the notion of idempotent ultrafilter is
considered and studied in the general setting of semigroups
(see \cite{hs}; see also the recent book
\cite{t}); however, for simplicity,
here we will stick to idempotents in $(\beta\N,\oplus)$. 

For sets $A\subseteq\N$ and for ultrafilters $\V$, let us denote by
$$A_\V\ =\ \{n\mid A-n\in\V\}.$$
So, by definition,  $A\in \F\oplus\V$ if and only if $A_\V\in\F$.
Notice that for every $A,B$ one has $A_\V\cap B_\V=(A\cap B)_\V$,
$A_\V\cup B_\V=(A\cup B)_\V$, and
$(A_\V)^c=(A^c)_\V$.

The following construction of filters will
be useful in the sequel.

\begin{definition}
For filters $\F,\G$ and ultrafilter $\V$, let
$$\F(\V,\G)\ =\ \{B\subseteq\N\mid B\supseteq F\cap A_\V\text{ for some }
F\in\F\text{ and some }A\in\G\}.$$
\end{definition}

Notice that, whenever it satisfies the \emph{finite intersection property},
the family $\F(\V,\G)$ is 
the smallest filter that contains both $\F$ and $\{A_\V\mid A\in\G\}$.
Families $\F(\V,\G)$ satisfy the following properties 
that will be relevant to our purposes.

\begin{proposition}[\textsf{ZF}]\label{fvg}
Let $\F$ be a filter, and let $\V$ be an ultrafilter.
Then for every filter $\G\supseteq\F\oplus\V$,
the family $\F(\V,\G)$ is a filter such that
$\F\subseteq\F(\V,\G)$ and $\G\subseteq\F(\V,\G)\oplus\V$. 
\end{proposition}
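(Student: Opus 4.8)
The plan is to verify the three assertions in turn, dealing with the filter axioms first. I would begin with the easy closure properties of $\F(\V,\G)$. Closure under supersets is immediate from the definition, since if $B\supseteq F\cap A_\V$ and $C\supseteq B$ then $C\supseteq F\cap A_\V$. Closure under finite intersections comes from combining two witnessing pairs: if $B_1\supseteq F_1\cap (A_1)_\V$ and $B_2\supseteq F_2\cap (A_2)_\V$, then $B_1\cap B_2\supseteq (F_1\cap F_2)\cap (A_1\cap A_2)_\V$ by the identity $(A_1)_\V\cap (A_2)_\V=(A_1\cap A_2)_\V$, and $F_1\cap F_2\in\F$, $A_1\cap A_2\in\G$ since $\F,\G$ are filters. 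I would also record that $\N\in\F(\V,\G)$, which rests on the computation $\N_\V=\N$ (because $\N-n=\N$ for every $n$ and $\N\in\V$), so that $\N\supseteq\N\cap\N_\V$ with $\N\in\F$ and $\N\in\G$.

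The crux — and the only place where the hypothesis $\G\supseteq\F\oplus\V$ is needed — is showing that $\F(\V,\G)$ contains no empty set, i.e.\ that it has the finite intersection property. I would argue by contradiction: if $\emptyset\in\F(\V,\G)$, then $F\cap A_\V=\emptyset$ for some $F\in\F$ and $A\in\G$, which forces $F\subseteq (A_\V)^c=(A^c)_\V$. Since $\F$ is upward closed, this gives $(A^c)_\V\in\F$, i.e.\ $A^c\in\F\oplus\V$ by the defining equivalence for the pseudo-sum. Invoking $\F\oplus\V\subseteq\G$ then places $A^c\in\G$; but $A\in\G$ as well, so $\emptyset=A\cap A^c\in\G$, contradicting that $\G$ is a filter. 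This is the main obstacle, and it is precisely where the assumption on $\G$ earns its keep.

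The two inclusions are then short. For $\F\subseteq\F(\V,\G)$, any $F\in\F$ is witnessed by the choice $A=\N$, since $F\cap\N_\V=F$. For $\G\subseteq\F(\V,\G)\oplus\V$, unwinding the definition of $\oplus$ reduces the claim to $A_\V\in\F(\V,\G)$ for each $A\in\G$, which is witnessed by the choice $F=\N$, giving $\N\cap A_\V=A_\V$. No choice principle intervenes at any point, so the entire argument goes through in \textsf{ZF}, as the statement asserts.
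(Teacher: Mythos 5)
Your proof is correct and follows essentially the same route as the paper's: the same key step establishing nonemptiness from the hypothesis $\G\supseteq\F\oplus\V$ (via $F\subseteq(A_\V)^c=(A^c)_\V$, hence $A^c\in\F\oplus\V\subseteq\G$, contradicting $A\in\G$), and the same witnesses $A=\N$ and $F=\N$ for the two inclusions, using $\N_\V=\N$. The only difference is that you spell out the superset and intersection closure properties, which the paper leaves as a one-line observation.
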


\begin{proof}
The inclusion $\F(\V,\G)\supseteq\F$ follows from the trivial
observation that $\N_\V=\N$, and hence 
$F=F\cap\N=F\cap\N_\V\in\F(\V,\G)$
for every $F\in\F$.
All sets in $\F(\V,\G)$ are nonempty; indeed 
if $F\cap A_\V=\emptyset$ for some $F\in\F$ and some $A\in\G$,
then $F\subseteq(A_\V)^c=(A^c)_\V\Rightarrow (A^c)_\V\in\F
\Leftrightarrow A^c\in\F\oplus\V\subseteq\G\Rightarrow A\notin\G$.
Since $\F(\V,\G)$ is closed under supersets
and under finite intersections, it is a filter. Finally, 
$A\in\G\Rightarrow A_\V=\N\cap A_\V\in\F(\V,\G)\Leftrightarrow 
A\in\F(\V,\G)\oplus\V$.
\end{proof}

\begin{corollary}[\textsf{ZF}]\label{cor1}
Let $\F$ be a filter, and let $\V,\W$ be ultrafilters
where $\W\supseteq\F\oplus\V$.
Then for every ultrafilter $\U$ we have:
\begin{enumerate}
\item
If $\U\supseteq\F(\V,\W)$ then $\U\oplus\V=\W$\,;
\item
If $\U\oplus\V=\W$ and $\F\subseteq\U$ then 
$\F(\V,\W)\subseteq\U$.
\end{enumerate}
\end{corollary}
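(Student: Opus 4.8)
The plan is to treat the two parts separately, relying on Proposition~\ref{fvg} for the first and on a direct unwinding of the definition of $\F(\V,\W)$ for the second. Throughout I would use two elementary facts recorded in the excerpt: that pseudo-sum on the right is monotone, i.e.\ $\F_1\subseteq\F_2$ implies $\F_1\oplus\V\subseteq\F_2\oplus\V$ (immediate, since $A\in\F_i\oplus\V$ is equivalent to $A_\V\in\F_i$), and that two ultrafilters related by inclusion must coincide, while the pseudo-sum of two ultrafilters is again an ultrafilter.

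For part (1), I would start from the conclusion of Proposition~\ref{fvg} applied with $\G=\W$, which is legitimate since $\W\supseteq\F\oplus\V$ is exactly the standing hypothesis. This gives $\W\subseteq\F(\V,\W)\oplus\V$. Combining this with the monotonicity observation and the assumption $\U\supseteq\F(\V,\W)$ yields the chain $\W\subseteq\F(\V,\W)\oplus\V\subseteq\U\oplus\V$. Since $\U\oplus\V$ is an ultrafilter containing the ultrafilter $\W$, the two must be equal, so $\U\oplus\V=\W$ as required.

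For part (2), I would argue directly from the definition of $\F(\V,\W)$. Given $B\in\F(\V,\W)$, choose $F\in\F$ and $A\in\W$ with $B\supseteq F\cap A_\V$. The hypothesis $\F\subseteq\U$ gives $F\in\U$, while $A\in\W=\U\oplus\V$ translates, by the very definition of pseudo-sum, into $A_\V\in\U$. As $\U$ is closed under finite intersection, $F\cap A_\V\in\U$, and then closure under supersets gives $B\in\U$. Hence $\F(\V,\W)\subseteq\U$.

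I do not expect any genuinely hard step here: once Proposition~\ref{fvg} is in hand, part (1) is a monotonicity-plus-maximality argument and part (2) is a one-line membership check. The only point requiring a little care is to confirm that the hypothesis of Proposition~\ref{fvg} is met---namely $\W\supseteq\F\oplus\V$---so that $\F(\V,\W)$ is genuinely a filter and the inclusion $\W\subseteq\F(\V,\W)\oplus\V$ is available for the argument.
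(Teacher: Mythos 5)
Your proof is correct and follows essentially the same route as the paper's: part (1) via Proposition~\ref{fvg} and the maximality of ultrafilters, part (2) by unwinding the definition of $\F(\V,\W)$ using $\F\subseteq\U$ and $A_\V\in\U$ for $A\in\W$. The only difference is that you spell out the membership check in (2) in slightly more detail than the paper does.
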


\begin{proof}
(1) If $\U\supseteq\F(\V,\W)$ then $\U\supseteq\F$ and
$\W\subseteq\F(\V,\W)\oplus\V\subseteq\U\oplus\V$,
and hence $\W=\U\oplus\V$, since no inclusion between
ultrafilters can be proper, by their maximality.

(2) By definition, $\U\oplus\V=\W$ if and only if $A_\V\in\U$ for all $A\in\W$.
Since $F\in\U$ for all $F\in\F$, it follows that $\F(\V,\W)\subseteq\U$.
\end{proof}

Let us now denote by $\textsf{UT}(X)$ the restriction of
\textsf{UT} to the set $X$, namely the property that
every filter on $X$ is extended to an ultrafilter.
In particular, in the sequel we will consider
$\textsf{UT}(\N)$ and $\textsf{UT}(\R)$.\footnote
{~$\textsf{UT}(\N)$ is Form 225 in \cite{hr}.}

\begin{corollary}[\textsf{ZF+UT}($\N$)]\label{cor2}
Let $\F$ be a filter and let $\V,\W$ be ultrafilters.
Then $\W\supseteq\F\oplus\V$ if and only if
$\W=\U\oplus\V$ for some ultrafilter $\U\supseteq\F$.
\end{corollary}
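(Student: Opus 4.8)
The plan is to prove the biconditional in Corollary~\ref{cor2} by treating the two implications separately, using Corollary~\ref{cor1} as the main engine and invoking $\textsf{UT}(\N)$ only where an actual ultrafilter must be produced.

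First I would establish the backward direction $(\Leftarrow)$, which requires no choice principle. Assume $\W=\U\oplus\V$ for some ultrafilter $\U\supseteq\F$. I want to show $\W\supseteq\F\oplus\V$, so let $A\in\F\oplus\V$; by definition this means $A_\V\in\F$, and since $\F\subseteq\U$ we get $A_\V\in\U$. But $A\in\U\oplus\V$ holds precisely when $A_\V\in\U$, so $A\in\U\oplus\V=\W$. This gives $\F\oplus\V\subseteq\W$ directly.

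Next I would prove the forward direction $(\Rightarrow)$, which is where $\textsf{UT}(\N)$ enters. Assume $\W\supseteq\F\oplus\V$. By Proposition~\ref{fvg}, the family $\F(\V,\W)$ is a genuine filter on $\N$ (here I use that $\W\supseteq\F\oplus\V$ is exactly the hypothesis the proposition needs). Now I invoke $\textsf{UT}(\N)$ to extend this filter to an ultrafilter $\U\supseteq\F(\V,\W)$. Since $\F\subseteq\F(\V,\W)\subseteq\U$, we have $\F\subseteq\U$, and by part~(1) of Corollary~\ref{cor1} the inclusion $\U\supseteq\F(\V,\W)$ forces $\U\oplus\V=\W$. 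Thus $\U$ is the desired ultrafilter, completing this direction.

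The only genuine obstacle is making sure the hypotheses of Proposition~\ref{fvg} and Corollary~\ref{cor1} are met so that $\F(\V,\W)$ is a filter and part~(1) applies; both reduce to the standing assumption $\W\supseteq\F\oplus\V$, so once that is in hand the argument is essentially a chain of citations. The role of $\textsf{UT}(\N)$ is confined to the single extension step, and it is worth emphasizing that the backward direction is pure $\textsf{ZF}$, so only the forward implication genuinely consumes the Ultrafilter Theorem on $\N$.
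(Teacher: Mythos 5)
Your proof is correct and follows essentially the same route as the paper's: the backward direction is the trivial monotonicity of $\oplus$ in its left argument (which you spell out elementwise), and the forward direction extends $\F(\V,\W)$ to an ultrafilter via $\textsf{UT}(\N)$ and applies part~(1) of Corollary~\ref{cor1}, exactly as the paper does. Your explicit verification that the hypothesis $\W\supseteq\F\oplus\V$ is what licenses both Proposition~\ref{fvg} and Corollary~\ref{cor1} is a welcome clarification but not a departure.
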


\begin{proof}
One direction is trivial, because $\U\supseteq\F$
directly implies $\U\oplus\V\supseteq\F\oplus\V$ for every $\V$.
Conversely, given an ultrafilter
$\W\supseteq\F\oplus\V$, by \textsf{UT}($\N$) we can
pick an ultrafilter $\U\supseteq\F(\V,\W)\supseteq\F$,
and the equality $\U\oplus\V=\W$ is satisfied by (1) of
the previous corollary.
\end{proof}

\medskip
\section{Additive filters}\label{sec-additive}

The central notion in this paper is the following.

\begin{definition}
A filter $\F$ is \emph{additive}
if for every ultrafilter $\V\supseteq\F$,
the pseudo-sum
$\F\oplus\V\supseteq\F$; that is, $A_\V\in\F$
for every $A\in\F$.
\end{definition}

\begin{remark}
In 1989, T. Papazyan \cite{p} considered the 
\emph{almost translation invariant filters} $\F$ such that 
$\F\subseteq\F\oplus\F$, and showed that every maximal filter 
in that class is necessarily an ultrafilter, and hence an idempotent ultrafilter. 
We remark that almost translation invariance is a stronger notion with
respect to addivity; indeed, it is straightforwardly seen that any 
almost translation invariant filter is additive, 
\emph{but not conversely}. (For the latter assertion, see
Example \ref{additivenotidem} in the next section.)
That same class of filters, named \emph{idempotent filters},
has been thoroughly investigated by P. Krautzberger in \cite{k}.
\end{remark}

A first trivial example of an additive filter is given by $\F=\{\N\}$;
another easy example is given by 
the Fr\'{e}chet filter $\{A\subseteq\N\mid A^c\text{ finite}\}$ of cofinite sets.
More interesting examples are obtained by considering
``additively large sets''.
For any $X\subseteq\N$,  
the set of all (finite) sums of distinct elements of $X$ is denoted by
$$\text{FS}(X) =\ \left\{\sum_{x\in F}x\,\Big|\,
F\subseteq X\text{ is finite nonempty}\right\}.$$

Recall that a set $A\subseteq\N$ is called 
\emph{additively large} if it contains
a set $\text{FS}(X)$ for some infinite $X$.
A stronger version of \emph{Hindman's Theorem}
states that the family of additively large
sets is \emph{partition regular}, \emph{i.e.}, if an additively
large set is partitioned into finitely many pieces, then
one of the pieces is still additively large.
By using a model-theoretic argument,
it was shown that this property is a $\textsf{ZF}$-theorem,
although no explicit proof is known
where the use of the axiom of choice is avoided
(see \S 4.2 of \cite{c}).

As mentioned in the introduction, idempotent ultrafilters
can be used to give a short and elegant proof
of Hindman's Theorem; indeed, in \textsf{ZF},
all sets in an idempotent
ultrafilter are additively large, whereas, in \textsf{ZFC},
for every additively large set $A$ there exists an
idempotent ultrafilter $\U$ such that $A\in\U$.\footnote
{~See Theorem 5.12 and Lemma 5.11 of \cite{hs}, respectively.}
For completeness, let us recall here a proof of the former 
combinatorial property, whose simplicity
and elegance was the main motivation 
for the interest in that special class of ultrafilters.

Notice first that if $\V\oplus\V=\V$ then
$\V$ is non-principal.\footnote
{~The only possible principal idempotent ultrafilter
would be generated by an element $m$ such that
$m+m=m$, whereas we agreed that $0\notin\N$.}
If $A\in\V$, then
$A^\star=A\cap A_\V\in\V$. It is readily verified
that $A^\star-a\in\V$ for every $a\in A^\star$. 
Pick any $x_1\in A^\star$. Then 
$A_1=A^\star\cap(A^*-x_1)\in\V$, and we can pick 
$x_2\in A_1$ where $x_2>x_1$.
Since $x_1,x_2,x_1+x_2\in A^\star$, the set 
$A_2=A^\star\cap(A^\star-x_1)\cap(A^\star-x_2)\cap(A^\star-x_1-x_2)\in\V$,
and we can pick $x_3\in A_2$ where $x_3>x_2$. 
By iterating the process,
one obtains an infinite sequence $X=\{x_1<x_2<x_3<\ldots\}$
such that $\text{FS}(X)\subseteq A^\star\subseteq A$, as desired.\footnote
{~For detailed proofs of other basic properties
of idempotent ultrafilters the reader is referred to \cite{hs}.}

Every additively large set determines an additive filter, as the
next \textsf{ZF}-example clarifies.

\begin{example}\label{fsx}
Given an infinite set $X=\{x_1<x_2<\ldots\}$, denote by
$$\mathcal{FS}_X\ =\ 
\left\{A\subseteq\N\mid A\supseteq\text{FS}(X\setminus F)
\text{ for some finite }F\subset X\right\}.$$
Clearly, $\mathcal{FS}_X$ is a filter that 
contains $\text{FS}(X)$. It just takes a quick check to verify that
$\mathcal{FS}_X\subseteq\mathcal{FS}_X\oplus\mathcal{FS}_X$,
and hence $\mathcal{FS}_X$ is additive.
\end{example}



Proposition \ref{char_addfil} below
provides a necessary and sufficient condition for a filter to be additive, and shows that
additive filters directly correspond to the closed 
sub-semigroups of $(\beta\N,\oplus)$.

\begin{proposition}[\textsf{ZF+UT}($\N$)]\label{char_addfil}
A filter $\F$ is additive if and only if 
$\F\subseteq\U\oplus\V$ for every pair of ultrafilters $\U,\V\supseteq\F$.
\end{proposition}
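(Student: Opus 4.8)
The plan is to prove the two implications separately, the whole argument resting on a single representation fact which is where $\textsf{UT}(\N)$ enters: for any filter $\F$ and any ultrafilter $\V$,
$$\F\oplus\V\ =\ \bigcap\{\,\U\oplus\V\mid\U\supseteq\F\ \text{an ultrafilter}\,\}.$$
To obtain this I would first record the standard consequence of $\textsf{UT}(\N)$ that every filter is the intersection of the ultrafilters extending it, namely $\F=\bigcap\{\U\mid\U\supseteq\F\}$. Indeed, if $B\notin\F$ then no $F\in\F$ satisfies $F\subseteq B$ (otherwise $B\in\F$ by upward closure), so $\F\cup\{B^c\}$ has the finite intersection property and by $\textsf{UT}(\N)$ extends to an ultrafilter $\U$ with $B\notin\U$. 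Feeding $A_\V$ into this fact, $A_\V\in\F$ iff $A_\V\in\U$ for all $\U\supseteq\F$; combined with the definitional equivalence $A\in\U\oplus\V\Leftrightarrow A_\V\in\U$, this yields the displayed identity.

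For the forward implication, that additivity implies the stated condition, no choice is needed. Assuming $\F$ additive, I would fix ultrafilters $\U,\V\supseteq\F$. From $\F\subseteq\U$ one gets $\F\oplus\V\subseteq\U\oplus\V$, because $A\in\F\oplus\V$ means $A_\V\in\F\subseteq\U$, whence $A\in\U\oplus\V$; and additivity applied to $\V\supseteq\F$ gives $\F\subseteq\F\oplus\V$. Chaining these, $\F\subseteq\F\oplus\V\subseteq\U\oplus\V$, which is exactly the required condition.

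For the reverse implication I would use the representation. Assume $\F\subseteq\U\oplus\V$ for all ultrafilters $\U,\V\supseteq\F$, fix a single ultrafilter $\V\supseteq\F$, and take an arbitrary $A\in\F$; the goal is $A_\V\in\F$, i.e. $A\in\F\oplus\V$. For every ultrafilter $\U\supseteq\F$ the hypothesis gives $A\in\U\oplus\V$, hence $A_\V\in\U$; as this holds for all such $\U$, the fact $\F=\bigcap\{\U\mid\U\supseteq\F\}$ forces $A_\V\in\F$. Thus $\F\oplus\V\supseteq\F$ for every $\V\supseteq\F$, i.e. $\F$ is additive.

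The only genuine obstacle is the representation step, and more precisely the equality $\F=\bigcap\{\U\mid\U\supseteq\F\}$: this is the sole place the choice assumption is invoked, to manufacture an ultrafilter avoiding a prescribed $B\notin\F$. Everything else is bookkeeping with the elementary identities $A_\V\cap B_\V=(A\cap B)_\V$ and $(A_\V)^c=(A^c)_\V$ recorded earlier, together with the monotonicity of $\oplus$ in its first argument.
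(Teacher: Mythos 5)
Your proof is correct and follows essentially the same route as the paper's: both hinge on the \textsf{UT}($\N$) fact that $\F=\bigcap\{\U\mid\U\supseteq\F\}$ (obtained, as you do, by extending $\F\cup\{B^c\}$ for $B\notin\F$ via the finite intersection property), the forward direction being trivial in \textsf{ZF} and the reverse direction being exactly this fact applied to $B=A_\V$. The only cosmetic differences are that you prove the intersection fact rather than cite it, repackage it as the identity $\F\oplus\V=\bigcap\{\U\oplus\V\mid\U\supseteq\F\}$, and argue the nontrivial direction directly, whereas the paper argues it in contrapositive form with the explicit witness family $\{F\cap(A_\V)^c\mid F\in\F\}$.
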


\begin{proof}
For the ``only if'' implication, notice first that \textsf{ZF+UT}($\N$)
implies that $\F$ equals the intersection of all
ultrafilters $\U\supseteq\F$. By the hypothesis, for every $A\in\F$
one has that $A_\V\in\U$ for all ultrafilters $\U\supseteq\F$,
and hence $A_\V\in\F$. Conversely,
assume there exists an ultrafilter $\V\supseteq\F$
with $\F\not\subseteq\F\oplus\V$, and pick
a set $A\in\F$ with $A\notin\F\oplus\V$, that is, $A_\V\notin\F$.
Then there exists an ultrafilter
$\U\supseteq\F$ with $A_\V\notin\U$
(note that the family $\{F\cap(A_\V)^c\mid F\in\F\}$ has
the finite intersection property, thus it can be extended to an ultrafilter
by $\textsf{UT}(\N)$).
Then $A\notin\U\oplus\V$ and the 
set $A$ is a witness of $\F\not\subseteq\U\oplus\V$.
\end{proof}

\begin{remark}\label{fil-cl}
If $C$ is a nonempty closed
sub-semigroup of $(\beta\N,\oplus)$ then
$$\text{Fil}(C)\ =\ \bigcap_{\U\in C}\U$$
is an additive filter.
To show this, notice first that if $\V\supseteq\text{Fil}(C)$
is an ultrafilter, then $\V\in \overline{C}=C$. So, for
all ultrafilters $\V,\V'\supseteq\text{Fil}(C)$ one has
that $\V\oplus\V'\in C$ by the property of sub-semigroup,
and hence $\V\oplus\V'\supseteq \text{Fil}(C)$.
Conversely, if $\F$ is an additive filter, then 
\textsf{UT}($\N$) implies that 
$$\text{Cl}(\F)\ =\ \{\U\in\beta\N\mid\U\supseteq\F\}$$
is a nonempty closed sub-semigroup. Moreover, the two operations
are one the inverse of the other, since
$\text{Cl}(\text{Fil}(C))=C$ and
$\text{Fil}(\text{Cl}(\F))=\F$ for every nonempty closed
sub-semigroup $C$ and for every additive filter $\F$.
\end{remark}

Next, we show two different ways of extending additive filters
that preserve the additivity property.

\begin{proposition}[\textsf{ZF+UT}($\N$)]\label{FplusVadditive}
Let $\F$ be an additive filter. Then for every ultrafilter $\V\supseteq\F$,
the filter $\F\oplus\V$ is additive.
\end{proposition}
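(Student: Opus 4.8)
The plan is to prove additivity of $\F\oplus\V$ through the characterization in Proposition \ref{char_addfil}, rather than from the raw definition, which would force one to analyze iterated sums like $(A_\W)_\V$ and does not reduce cleanly. First I would record two preliminary facts. One, that $\F\oplus\V$ is genuinely a filter: from the identities $A_\V\cap B_\V=(A\cap B)_\V$ and the monotonicity $A\subseteq B\Rightarrow A_\V\subseteq B_\V$, together with $\N_\V=\N$ and $\emptyset_\V=\emptyset$, closure under finite intersections and supersets follows, and $\emptyset\notin\F\oplus\V$. Two, the elementary monotonicity already used in the proof of Corollary \ref{cor2}: if $\U\supseteq\F$ then $\U\oplus\V\supseteq\F\oplus\V$, since $A\in\F\oplus\V$ gives $A_\V\in\F\subseteq\U$, whence $A\in\U\oplus\V$.

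By Proposition \ref{char_addfil}, it then suffices to show $\F\oplus\V\subseteq\W_1\oplus\W_2$ for every pair of ultrafilters $\W_1,\W_2\supseteq\F\oplus\V$. The first key step is to invoke Corollary \ref{cor2}: since each $\W_i\supseteq\F\oplus\V$, there are ultrafilters $\U_1,\U_2\supseteq\F$ with $\W_i=\U_i\oplus\V$. As all terms are now ultrafilters, the associativity of $\oplus$ on $\beta\N$ lets me rewrite
$$\W_1\oplus\W_2=(\U_1\oplus\V)\oplus(\U_2\oplus\V)=\bigl(\U_1\oplus(\V\oplus\U_2)\bigr)\oplus\V.$$

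The crux is a repeated application of the additivity of $\F$ via Proposition \ref{char_addfil}. Since $\V\supseteq\F$ and $\U_2\supseteq\F$, additivity yields $\V\oplus\U_2\supseteq\F$; and since $\U_1\supseteq\F$ and $\V\oplus\U_2\supseteq\F$, a second application gives $\U':=\U_1\oplus(\V\oplus\U_2)\supseteq\F$. Finally, from $\U'\supseteq\F$ the monotonicity principle yields $\U'\oplus\V\supseteq\F\oplus\V$, that is, $\W_1\oplus\W_2\supseteq\F\oplus\V$, which is exactly the condition Proposition \ref{char_addfil} demands for $\F\oplus\V$.

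I expect the main obstacle to be the bookkeeping: one must verify at each stage that the relevant ultrafilter contains $\F$ before the characterization may be applied again (first $\V\oplus\U_2\supseteq\F$, then $\U_1\oplus(\V\oplus\U_2)\supseteq\F$), and one must take care to close with the \emph{plain} monotonicity of $\oplus$ rather than a further appeal to additivity. The decomposition $\W_i=\U_i\oplus\V$ supplied by Corollary \ref{cor2} is where $\textsf{UT}(\N)$ enters, in keeping with the ambient hypotheses.
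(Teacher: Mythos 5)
Your proof is correct, and it takes essentially the same route as the paper's: both arguments hinge on the Corollary \ref{cor2} decomposition of an ultrafilter extending $\F\oplus\V$ as $\U\oplus\V$ with $\U\supseteq\F$, followed by two applications of the additivity of $\F$ (first to get $\F\subseteq\V\oplus\U$, then to absorb that ultrafilter into a further sum) and a final right-multiplication by $\V$ using plain monotonicity and associativity. The only cosmetic difference is the verification target: you check the pair characterization of Proposition \ref{char_addfil} for $\F\oplus\V$ (so you decompose two ultrafilters $\W_1,\W_2$), whereas the paper checks the raw definition, namely $(\F\oplus\V)\oplus\W\supseteq\F\oplus\V$ for a single ultrafilter $\W\supseteq\F\oplus\V$ --- the underlying chain of inclusions is identical.
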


\begin{proof}
Take any ultrafilter $\W\supseteq\F\oplus\V$. Then, by Corollary \ref{cor2},
there exists an ultrafilter $\U\supseteq\F$
such that $\W=\U\oplus\V$. By additivity of $\F$, we have that
$\F\subseteq\F\oplus\U\subseteq\V\oplus\U\Rightarrow
\F\subseteq\F\oplus\V\oplus\U\Rightarrow
\F\oplus\V\subseteq\F\oplus\V\oplus\U\oplus\V=
(\F\oplus\V)\oplus\W$.
\end{proof}

\begin{proposition}[\textsf{ZF}]\label{fvv}
Let $\F$ be an additive filter. Then for 
every ultrafilter $\V$ where $\V\supseteq\F\oplus\V$, 
$\F(\V,\V)$ is an additive filter.
\end{proposition}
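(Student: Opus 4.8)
The plan is to verify additivity straight from the definition, using the \textsf{ZF}-results already established. First I would record that $\F(\V,\V)$ really is a filter with $\F\subseteq\F(\V,\V)$: this is precisely Proposition \ref{fvg} applied with $\G=\V$, whose hypothesis $\G\supseteq\F\oplus\V$ is met exactly because we are assuming $\V\supseteq\F\oplus\V$. So what remains is to show that for every ultrafilter $\W\supseteq\F(\V,\V)$ one has $\F(\V,\V)\subseteq\F(\V,\V)\oplus\W$, i.e. $B_\W\in\F(\V,\V)$ for every $B\in\F(\V,\V)$.

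The crucial observation I would isolate is that right-multiplication by $\V$ fixes every such $\W$. Corollary \ref{cor1} applies with the role of its ``$\W$'' played by our $\V$ (again the standing hypothesis $\V\supseteq\F\oplus\V$ is what licenses this), so its part (1) reads: any ultrafilter extending $\F(\V,\V)$, multiplied on the right by $\V$, gives back $\V$. Applying this to $\W$ itself yields $\W\oplus\V=\V$.

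With this in hand the verification becomes a short computation on generators. A generic generator of $\F(\V,\V)$ has the form $F\cap A_\V$ with $F\in\F$ and $A\in\V$. Applying the operation $X\mapsto X_\W$, which commutes with finite intersections, gives $(F\cap A_\V)_\W=F_\W\cap(A_\V)_\W$. Two facts then combine: the iteration identity $(A_\V)_\W=A_{\W\oplus\V}$ (the same shift computation underlying associativity of $\oplus$) together with $\W\oplus\V=\V$ shows $(A_\V)_\W=A_\V$; and since $\F$ is additive and $\W\supseteq\F(\V,\V)\supseteq\F$, we get $F_\W\in\F$. Hence $(F\cap A_\V)_\W=F_\W\cap A_\V$ is again a generator of $\F(\V,\V)$. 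As every $B\in\F(\V,\V)$ contains such a generator and $X\mapsto X_\W$ is monotone, $B_\W$ contains $F_\W\cap A_\V\in\F(\V,\V)$, whence $B_\W\in\F(\V,\V)$, as required.

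The one point that needs care --- and the only real obstacle --- is assembling the two ``diagonal'' uses of $\V$: recognizing that Corollary \ref{cor1} may be invoked with its ``$\W$'' set equal to $\V$ in order to collapse $\W\oplus\V$ to $\V$, and checking the shift identity $(A_\V)_\W=A_{\W\oplus\V}$ by unwinding $(A-n)_\V=A_\V-n$. Everything here stays within \textsf{ZF}: Corollary \ref{cor1} and Proposition \ref{fvg} are both \textsf{ZF}-results and additivity of $\F$ is used in its raw form, so no appeal to the Ultrafilter Theorem enters.
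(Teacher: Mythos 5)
Your proof is correct, and it takes a route that differs from the paper's in a meaningful way, even though the two share the same key ingredients (Proposition \ref{fvg} with $\G=\V$, Corollary \ref{cor1}(1) applied with its ``$\W$'' equal to $\V$, and additivity of $\F$). The paper fixes a \emph{pair} of ultrafilters $\U_1,\U_2\supseteq\F(\V,\V)$ and proves $\F(\V,\V)\subseteq\U_1\oplus\U_2$: from $\U_1\oplus\V=\U_2\oplus\V=\V$ it deduces $\V=\U_1\oplus\U_2\oplus\V$, hence $A_\V\in\U_1\oplus\U_2$ for every $A\in\V$, while $\F\subseteq\U_1\oplus\U_2$ follows from additivity of $\F$; in other words, it verifies the two-ultrafilter criterion of Proposition \ref{char_addfil}, which is the statement that $\mathrm{Cl}(\F(\V,\V))$ is a sub-semigroup. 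You instead verify the definition of additivity head-on: for a single ultrafilter $\W\supseteq\F(\V,\V)$ you compute on generators, $(F\cap A_\V)_\W=F_\W\cap(A_\V)_\W=F_\W\cap A_{\W\oplus\V}=F_\W\cap A_\V\in\F(\V,\V)$, and upward closure gives $B_\W\in\F(\V,\V)$ for every $B\in\F(\V,\V)$. Your formulation buys something real relative to the \textsf{ZF} label of the proposition: in \textsf{ZF} alone the pairs criterion is only known to imply additivity via $\textsf{UT}(\N)$ (that is exactly the direction of Proposition \ref{char_addfil} that needs $\F(\V,\V)$ to equal the intersection of the ultrafilters extending it), so the paper's proof, read literally, ends at the criterion and leaves the final conversion unstated; the missing step is precisely your identity $(A_\V)_\W=A_{\W\oplus\V}=A_\V$, which turns ``$A_\V\in\U_1\oplus\W$ for every ultrafilter $\U_1\supseteq\F(\V,\V)$'' into ``$(A_\V)_\W\in\F(\V,\V)$'' without any appeal to choice. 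In short: both proofs rest on the same two pillars ($\W\oplus\V=\V$ and $F_\W\in\F$), but your generator-level computation is the one that literally delivers the stated \textsf{ZF} conclusion, whereas the paper's packages the argument in the semigroup form of Remark \ref{fil-cl}.
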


\begin{proof}
Let $\U_1,\U_2\supseteq\F(\V,\V)$ be ultrafilters.
We want to show that $\F(\V,\V)\subseteq\U_1\oplus\U_2$.
Since $\F$ is additive and 
$\U_1,\U_2\supseteq\F(\V,\V)\supseteq\F$ by Proposition \ref{fvg},
we have that $\F\subseteq\U_1\oplus\U_2$. 
By Corollary \ref{cor1}, we have $\U_1\oplus\V=\U_2\oplus\V=\V$,
and so $\V=\U_1\oplus\U_2\oplus\V$.
But then for every $A\in\V$ the set $A_\V\in\U_1\oplus\U_2$,
and the proof is complete.
\end{proof}

\begin{theorem}[\textsf{ZF+UT}($\N$)]
If a filter is maximal among the additive filters
then it is an idempotent ultrafilter.
\end{theorem}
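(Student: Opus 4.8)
The plan is to apply maximality twice, feeding it the two additivity-preserving extensions from Propositions \ref{FplusVadditive} and \ref{fvv}. Let $\F$ be maximal among additive filters, and fix any ultrafilter $\V\supseteq\F$ (one exists by \textsf{UT}($\N$)).

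First I would establish that $\F\oplus\V=\F$. Since $\F$ is additive, its very definition gives $\F\subseteq\F\oplus\V$, and Proposition \ref{FplusVadditive} tells us that $\F\oplus\V$ is again additive. Being an additive filter that contains $\F$, it must coincide with $\F$ by maximality. Note in particular that $\V\supseteq\F=\F\oplus\V$, so $\V$ satisfies the hypothesis $\V\supseteq\F\oplus\V$ required to invoke Proposition \ref{fvv}.

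Next I would show $\F$ is an ultrafilter by proving $\V\subseteq\F$, the inclusion $\F\subseteq\V$ being given. By Proposition \ref{fvv} the family $\F(\V,\V)$ is additive, and by Proposition \ref{fvg} it contains $\F$; hence maximality yields $\F(\V,\V)=\F$. Now for each $A\in\V$ we have $A_\V=\N\cap A_\V\in\F(\V,\V)=\F$, and $A_\V\in\F$ is by definition the assertion $A\in\F\oplus\V=\F$. Thus $A\in\F$ for every $A\in\V$, which gives $\V\subseteq\F$ and therefore $\F=\V$.

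Idempotency is then immediate: $\F$ equals the ultrafilter $\V$, so the identity $\F\oplus\V=\F$ from the first step is exactly $\F\oplus\F=\F$. The one step carrying the real content is the middle one, where the construction $\F(\V,\V)$ together with maximality is precisely what upgrades the pointwise fact ``$A_\V\in\F$ for all $A\in\V$'' into the set inclusion $\V\subseteq\F$; once that is seen, the remaining verifications are routine applications of the earlier results.
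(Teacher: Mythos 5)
Your proof is correct and follows essentially the same route as the paper's: maximality is applied twice, to the additive extensions $\F\oplus\V$ (Proposition \ref{FplusVadditive}) and $\F(\V,\V)$ (Proposition \ref{fvv}), yielding $\F=\F\oplus\V=\F(\V,\V)$, from which $\V\subseteq\F$ and hence $\F=\V$ exactly as in the paper. The only cosmetic difference is the final step: the paper invokes Corollary \ref{cor1} to get $\V\oplus\V=\V$, whereas you substitute $\F=\V$ into $\F\oplus\V=\F$, which is equally valid and slightly more direct.
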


\begin{proof}
Let $\F$ be maximal among the additive filters.
By $\textsf{UT}(\N)$ we can pick an ultrafilter $\V\supseteq\F$.
We will show that $\V=\F$ and $\V\oplus\V=\V$.
By additivity $\F\subseteq\F\oplus\V$, and
since $\F\oplus\V$ is additive, by maximality
$\F=\F\oplus\V$. Since $\F$ is additive and the ultrafilter
$\V\supseteq\F\oplus\V$, also
the filter $\F(\V,\V)$ is additive by the previous
proposition and so, again by maximality, $\F(\V,\V)=\F$.
In particular, for every $A\in\V$ one has
that $A_\V\in\F(\V,\V)=\F$, that is
$A\in\F\oplus\V=\F$. This shows that $\V\subseteq\F$,
and hence $\V=\F$. Finally, since 
$\V\supseteq\F(\V,\V)$ and $\V\supseteq\F\oplus\V$, 
we have $\V\oplus\V=\V$ by Corollary \ref{cor1}.
\end{proof}

Thanks to the above properties of additive filters,
one proves the existence of idempotent ultrafilters
with a straight application of \emph{Zorn's Lemma}.

\begin{theorem}[\textsf{ZFC}]\label{maintheorem}
Every additive filter can be extended to an idempotent ultrafilter.
\end{theorem}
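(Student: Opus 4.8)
The plan is to apply Zorn's Lemma to the poset of additive filters that extend a given additive filter $\F_0$, ordered by inclusion, and then invoke the previous theorem. Concretely, let $\F_0$ be any additive filter and consider the collection
$$\PP\ =\ \{\F\mid \F\text{ is an additive filter and }\F\supseteq\F_0\},$$
partially ordered by $\subseteq$. The goal is to show $\PP$ has a maximal element; since that maximal element is then maximal among \emph{all} additive filters containing $\F_0$, and in particular cannot be properly extended by any additive filter, the preceding theorem applies and yields that it is an idempotent ultrafilter extending $\F_0$.

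To invoke Zorn's Lemma I must verify the chain condition: every nonempty chain $\mathcal{C}\subseteq\PP$ has an upper bound in $\PP$. The natural candidate is the union $\F^*=\bigcup_{\F\in\mathcal{C}}\F$. First I would check that $\F^*$ is a filter: it is nonempty, every member is nonempty, it is closed under supersets (immediate), and it is closed under finite intersections because any two sets in $\F^*$ already lie together in a single $\F\in\mathcal{C}$ by the linearity of the chain, whence their intersection lies in that $\F\subseteq\F^*$. This step is entirely routine. The substantive point is that $\F^*$ must itself be \emph{additive}, so that it genuinely lies in $\PP$ and serves as an upper bound.

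The verification of additivity of the union is the step I expect to be the main obstacle, since additivity is not obviously preserved under arbitrary unions (it quantifies over \emph{all} ultrafilters extending the filter, a condition that becomes more stringent, not less, as the filter grows). Here I would use the characterization of additivity directly: $\F^*$ is additive iff $A_\V\in\F^*$ for every $A\in\F^*$ and every ultrafilter $\V\supseteq\F^*$. Given such $A$ and $\V$, the set $A$ already belongs to some $\F\in\mathcal{C}$; moreover $\V\supseteq\F^*\supseteq\F$, so $\V$ is an ultrafilter extending the additive filter $\F$, and additivity of $\F$ gives $A_\V\in\F\subseteq\F^*$. This shows $\F^*$ is additive, completing the chain bound. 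The key insight making the obstacle dissolve is that additivity is a condition one may check \emph{membership by membership}: each witness $A\in\F^*$ is supplied by some level of the chain at which additivity already holds, while the enlargement of the ultrafilter range only helps, as a smaller filter $\F$ imposes its additivity requirement against the larger family of extending ultrafilters.

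With the chain condition established, Zorn's Lemma produces a maximal element $\F_{\max}\in\PP$. Since $\F_{\max}$ admits no proper additive extension, it is maximal among additive filters; by the previous theorem (which is available under the weaker $\textsf{ZF}+\textsf{UT}(\N)$, hence a fortiori under $\textsf{ZFC}$) it is an idempotent ultrafilter, and by construction $\F_{\max}\supseteq\F_0$. This establishes that every additive filter extends to an idempotent ultrafilter.
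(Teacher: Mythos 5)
Your proof is correct and follows essentially the same route as the paper: apply Zorn's Lemma to the family of additive filters extending $\F_0$, observe that the union of a chain of additive filters is additive (your membership-by-membership verification is exactly the right argument, which the paper leaves as ``easily verified''), and conclude via the theorem that maximal additive filters are idempotent ultrafilters. Your extra remark that a maximal element of the restricted poset is in fact maximal among \emph{all} additive filters is a worthwhile detail the paper glosses over.
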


\begin{proof}
Given an additive filter $\F$, consider the following family
$$\mathbb{F}\ =\ \{\G\supseteq\F\mid\ \G\ \text{is an additive filter}\}.$$
It is easily verified that if $\langle \G_i\mid i\in I\rangle$ is an increasing
sequence of filters in $\mathbb{F}$,
then the union $\bigcup_{i\in I}\F_i$ is an additive filter.
So, \emph{Zorn's Lemma} applies, and one gets
a maximal element $\G\in\mathbb{F}$.
By the previous theorem, $\G\supseteq\F$ is an 
idempotent ultrafilter.
\end{proof}

\begin{remark}
As already pointed out in the introduction, with the only
exception of \cite{p}, the only known proof 
of existence of idempotent ultrafilters
is grounded on \emph{Ellis-Numakura's Lemma}, a general
result in semigroup theory that establishes the existence of idempotent 
elements in every compact right-topological semigroups.
An alternate argument to prove the above Theorem \ref{maintheorem}
can be obtained by same pattern.
Indeed, given an additive filter $\F$, by Remark \ref{fil-cl}
we know that $C=\text{Cl}(\F)$ is a closed nonempty sub-semigroup
of the compact right-topological semigroup $(\beta\N,\oplus)$.
In consequence, $(C,\oplus)$ is itself
a compact right-topological semigroup, so
\emph{Ellis-Numakura's Lemma} applies, and one gets
the existence of an idempotent
element $\U\in C$; clearly, $\U\supseteq\F$.
\end{remark}

As \emph{Zorn's Lemma} was never used in this section
except for the last theorem above, we are naturally lead to
the following question:

\smallskip
\begin{itemize}
\item
Can one prove Theorem \ref{maintheorem} without
using \emph{Zorn's Lemma}?
\end{itemize}

\smallskip
Clearly, at least some weakened form of the 
\emph{Ultrafilter Theorem} must be assumed, as otherwise
there may be no non-principal ultrafilters at all (see \cite{hr}). 
We will address the above
question in the next section.

\medskip
\section{Avoiding Zorn's Lemma}

\begin{proposition}[\textsf{ZF}]\label{step-one}
Assume there exists a choice function $\Phi$ that
associates to every additive filter $\F$
an ultrafilter $\Phi(\F)\supseteq\F$. Then
there exists a choice function $\Psi$ that
associates to every additive filter $\F$ an
ultrafilter $\Psi(\F)\supseteq\F$ such that
$\Psi(\F)\supseteq\F\oplus\Psi(\F)$.
\end{proposition}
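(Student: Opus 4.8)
The plan is to iterate $\Phi$ along a canonical transfinite chain of additive filters and exploit the fact that any such increasing chain of filters must stabilize. Fix an additive filter $\F$ and define by recursion on the ordinals a family of filters: set $\F_0=\F$; at successor stages put $\F_{\alpha+1}=\F_\alpha\oplus\Phi(\F_\alpha)$; and at limit stages put $\F_\lambda=\bigcup_{\alpha<\lambda}\F_\alpha$. I would first check, by induction, that every $\F_\alpha$ is an additive filter and that the chain is non-decreasing. At successor stages this is exactly Proposition~\ref{FplusVadditive}: since $\F_\alpha$ is additive and $\Phi(\F_\alpha)\supseteq\F_\alpha$ is an ultrafilter, the pseudo-sum $\F_{\alpha+1}=\F_\alpha\oplus\Phi(\F_\alpha)$ is again additive, and additivity also yields $\F_\alpha\subseteq\F_\alpha\oplus\Phi(\F_\alpha)=\F_{\alpha+1}$. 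At limit stages the union of an increasing chain of additive filters is additive (the same elementary verification used in the proof of Theorem~\ref{maintheorem}). The point of keeping every $\F_\alpha$ additive is that $\Phi$ is only ever applied to additive filters, as its domain requires.

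Next I would argue that the chain stabilizes. Each $\F_\alpha$ is a subset of the fixed set $\PP(\N)$, so $\alpha\mapsto\F_\alpha$ is a non-decreasing map of the ordinals into $\PP(\PP(\N))$. If it were strictly increasing at every successor stage, it would be an injective class function from the ordinals into a fixed set, which is impossible in $\textsf{ZF}$ (by Replacement, or by a Hartogs-number argument). Hence there is a least ordinal $\alpha^{*}$ with $\F_{\alpha^{*}+1}=\F_{\alpha^{*}}$, that is, $\F_{\alpha^{*}}\oplus\Phi(\F_{\alpha^{*}})=\F_{\alpha^{*}}$; note that from this stage on the chain is constant, since $\F_{\alpha^{*}+1}=\F_{\alpha^{*}}$ forces $\Phi(\F_{\alpha^{*}+1})=\Phi(\F_{\alpha^{*}})$ and hence $\F_{\alpha^{*}+2}=\F_{\alpha^{*}+1}$.

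I would then set $\Psi(\F)=\Phi(\F_{\alpha^{*}})$ and verify the two required properties. First, $\Psi(\F)=\Phi(\F_{\alpha^{*}})\supseteq\F_{\alpha^{*}}\supseteq\F_0=\F$, so $\Psi(\F)$ extends $\F$. Second, writing $\V=\Psi(\F)$ and using $\F\subseteq\F_{\alpha^{*}}$ together with monotonicity of $\oplus$ in its first argument and the stabilization identity, one gets $\F\oplus\V\subseteq\F_{\alpha^{*}}\oplus\V=\F_{\alpha^{*}}\oplus\Phi(\F_{\alpha^{*}})=\F_{\alpha^{*}}\subseteq\V$, which is precisely $\Psi(\F)\supseteq\F\oplus\Psi(\F)$. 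Finally, since the whole recursion — each successor, each limit union, and the choice of the least stabilization ordinal $\alpha^{*}$ — is uniquely determined by $\F$ and the given $\Phi$, the assignment $\F\mapsto\Psi(\F)$ is a genuine function and uses no choice beyond $\Phi$ itself.

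The step I expect to carry the real weight is maintaining additivity along the chain, namely the successor step, which is the content of Proposition~\ref{FplusVadditive}; everything else is bookkeeping. In particular, the stabilization is a pure $\textsf{ZF}$ cardinality argument that introduces no new use of choice, and the verification of the absorbing property $\F\oplus\V\subseteq\V$ reduces to the fixed-point equation $\F_{\alpha^{*}}\oplus\Phi(\F_{\alpha^{*}})=\F_{\alpha^{*}}$ holding at the stabilization stage.
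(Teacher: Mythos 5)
Your proposal is correct and follows essentially the same route as the paper's proof: transfinitely iterate the pseudo-sum with the $\Phi$-chosen ultrafilter (the paper does this conditionally, replacing $\F_\alpha$ by $\F_\alpha\oplus\Phi(\F_\alpha)$ only when $\Phi(\F_\alpha)\not\supseteq\F_\alpha\oplus\Phi(\F_\alpha)$, while you do it unconditionally), preserve additivity at successors via Proposition~\ref{FplusVadditive} and at limits via the chain-union argument, force stabilization by the Replacement-based argument, and set $\Psi(\F)=\Phi(\F_{\alpha^*})$ at the least fixed point. The only difference is cosmetic: your unconditional recursion makes the fixed-point equation read $\F_{\alpha^*}\oplus\Phi(\F_{\alpha^*})=\F_{\alpha^*}$, so your final verification $\F\oplus\Psi(\F)\subseteq\F_{\alpha^*}\oplus\Psi(\F)=\F_{\alpha^*}\subseteq\Psi(\F)$ is a direct computation, whereas the paper's conditional recursion (which may halt earlier) verifies the same property by contradiction.
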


\begin{proof}
Given an additive filter $\F$, let us define a sequence of filters
by transfinite recursion as follows.
At the base step, let $\F_0=\F$. At successor steps, 
let $\F_{\alpha+1}=\F_\alpha$ if
$\Phi(\F_\alpha)\supseteq\F_\alpha\oplus\Phi(\F_\alpha)$,
and let $\F_{\alpha+1}=\F_\alpha\oplus\Phi(\F_\alpha)$
otherwise. Finally, at limit steps $\lambda$, let 
$\F_\lambda=\bigcup_{\alpha<\lambda}\F_\alpha$.
It is readily seen by induction that all $\F_\alpha$
are additive filters, and that $\F_\alpha\subseteq\F_\beta$
for $\alpha\le\beta$.
If it was $\F_{\alpha+1}\ne\F_\alpha$
for all $\alpha$, then the sequence
$\langle\F_\alpha\mid\alpha\in\textbf{ON}\rangle$ 
would be strictly increasing.\footnote
{~By $\textbf{ON}$ we denote the proper class of all ordinals.} 
This is not possible, even without assuming $\textsf{AC}$,
because otherwise we would have a 1-1 correspondence
$\alpha\mapsto\F_{\alpha+1}\setminus\F_\alpha$
from the proper class $\textbf{ON}$
into the set $\mathcal{P}(\mathcal{P}(\N))$,
contradicting the replacement axiom schema.
Then define $\Psi(\F)=\Phi(\F_\alpha)$
where $\alpha$ is the least ordinal such that
$\F_{\alpha+1}=\F_\alpha$.
Such an ultrafilter $\Psi(\F)$ satisfies 
the desired properties. Indeed, 
$\Phi(\F_\alpha)\supseteq\F_\alpha\supseteq\F_0=\F$.
Moreover, if it was 
$\Phi(\F_\alpha)\not\supseteq\F\oplus\Phi(\F_\alpha)$,
then also
$\Phi(\F_\alpha)\not\supseteq\F_\alpha\oplus\Phi(\F_\alpha)$,
and so $\F_{\alpha+1}=\F_\alpha\oplus\Phi(\F_\alpha)$.
But then, since $\Phi(\F_\alpha)\supseteq\F_\alpha$
and $\Phi(\F_\alpha)\not\supseteq\F_{\alpha+1}$,
it would follow that
$\F_{\alpha+1}\ne\F_\alpha$, against the hypothesis.
\end{proof}

\begin{theorem}[\textsf{ZF}]\label{additivetoidempotent}
Assume there exists a choice function $\Phi$ that
associates to every additive filter $\F$
an ultrafilter $\Phi(\F)\supseteq\F$. Then
there exists a choice function $\Theta$ that
associates to every additive filter $\F$ an
idempotent ultrafilter $\Theta(\F)\supseteq\F$.
\end{theorem}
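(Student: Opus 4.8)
The plan is to combine Proposition \ref{step-one} with the extension procedure of Proposition \ref{fvv}, iterating the latter transfinitely until it stabilizes. First I would apply Proposition \ref{step-one} to the given $\Phi$ to obtain a choice function $\Psi$ assigning to each additive filter $\F$ an ultrafilter $\Psi(\F)\supseteq\F$ with the crucial extra property $\Psi(\F)\supseteq\F\oplus\Psi(\F)$. This single property is exactly what is needed both to invoke Proposition \ref{fvv} along the way and, at the very end, to apply part (1) of Corollary \ref{cor1}.

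Fix an additive filter $\F$. I would define, by transfinite recursion, an increasing chain of filters: set $\F_0=\F$; at successor steps put $\V_\alpha=\Psi(\F_\alpha)$ and $\F_{\alpha+1}=\F_\alpha(\V_\alpha,\V_\alpha)$; at limit steps $\lambda$ let $\F_\lambda=\bigcup_{\alpha<\lambda}\F_\alpha$. A routine induction then shows that every $\F_\alpha$ is an additive filter and that $\alpha\le\beta$ implies $\F_\alpha\subseteq\F_\beta$: at successor steps this is Proposition \ref{fvg} together with Proposition \ref{fvv}, whose hypothesis $\V_\alpha\supseteq\F_\alpha\oplus\V_\alpha$ is guaranteed by the defining property of $\Psi$; at limit steps it is the easy fact, already used in the proof of Theorem \ref{maintheorem}, that a union of an increasing chain of additive filters is again an additive filter. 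Since this whole recursion is driven solely by the fixed function $\Psi$, the resulting assignment is a bona fide class function in \textsf{ZF}, with no further appeal to choice.

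Next I would argue that the chain must stabilize. If $\F_{\alpha+1}\ne\F_\alpha$ held for every $\alpha$, the chain would be strictly increasing, yielding a one-to-one map $\alpha\mapsto\F_{\alpha+1}\setminus\F_\alpha$ from the proper class $\textbf{ON}$ into the set $\PP(\PP(\N))$, contradicting the replacement axiom schema; this is precisely the argument already used in Proposition \ref{step-one}, and it needs no form of \textsf{AC}. Hence there is a least ordinal $\alpha$ with $\F_{\alpha+1}=\F_\alpha$, and I would then set $\Theta(\F)=\V_\alpha=\Psi(\F_\alpha)$, noting at once that $\Theta(\F)\supseteq\F_\alpha\supseteq\F_0=\F$.

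It remains to check that $\V_\alpha$ is idempotent, and this is exactly where the fixed-point property is used. At this $\alpha$ we have $\F_\alpha(\V_\alpha,\V_\alpha)=\F_{\alpha+1}=\F_\alpha\subseteq\V_\alpha$, so $\V_\alpha\supseteq\F_\alpha(\V_\alpha,\V_\alpha)$; moreover $\V_\alpha\supseteq\F_\alpha\oplus\V_\alpha$ by the defining property of $\Psi$. Applying part (1) of Corollary \ref{cor1} with $\F=\F_\alpha$, with $\V=\W=\V_\alpha$, and with $\U=\V_\alpha$, one then concludes $\V_\alpha\oplus\V_\alpha=\V_\alpha$, as desired. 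I expect the main obstacle to be the conceptual point of verifying that iterating the $\F(\V,\V)$-construction converges to a genuine \emph{idempotent} rather than merely to a maximal additive filter. The resolution is the observation that, because $\Psi$ forces $\V\supseteq\F\oplus\V$ at every stage, the stabilization equation $\F_\alpha(\V_\alpha,\V_\alpha)=\F_\alpha$ already supplies both hypotheses of Corollary \ref{cor1}(1) simultaneously, so idempotency drops out directly, without reproving the separate maximality argument used in the \textsf{ZFC} version.
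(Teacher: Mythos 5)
Your proposal is correct and follows essentially the same route as the paper: apply Proposition \ref{step-one} to obtain $\Psi$, iterate the $\F(\V,\V)$-extension by transfinite recursion, force stabilization via the replacement-schema argument, and conclude idempotency from Corollary \ref{cor1}(1). The only (harmless) difference is that the paper's recursion freezes as soon as $\V_\alpha\supseteq\F_\alpha(\V_\alpha,\V_\alpha)$ while yours always sets $\F_{\alpha+1}=\F_\alpha(\V_\alpha,\V_\alpha)$; at your stabilization point the equation $\F_\alpha(\V_\alpha,\V_\alpha)=\F_\alpha\subseteq\V_\alpha$ supplies the same hypotheses for Corollary \ref{cor1}(1), so both versions go through.
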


\begin{proof}
Fix a function $\Psi$ as given by the previous proposition.
Given an additive filter $\F$, by transfinite recursion 
let us define the sequence 
$\langle\F_\alpha\mid\alpha\in\textbf{ON}\rangle$ as follows.
At the base step, let $\F_0=\F$. 
At successor steps $\alpha+1$,
consider the ultrafilter $\V_\alpha=\Psi(\F_\alpha)$,
and let $\F_{\alpha+1}=\F_\alpha$ if 
$\V_\alpha\supseteq\F_\alpha(\V_\alpha,\V_\alpha)$, and 
let $\F_{\alpha+1}=\F_\alpha(\V_\alpha,\V_\alpha)$ otherwise.
At limit steps $\lambda$, let 
$\F_\lambda=\bigcup_{\alpha<\lambda}\F_\alpha$.
It is shown by induction that all $\F_\alpha$
are additive filters, and that $\F_\alpha\subseteq\F_\beta$
for $\alpha\le\beta$.
Indeed, notice that at successor steps
$\F_\alpha(\V_\alpha,\V_\alpha)\supseteq\F_\alpha$ is additive
by Proposition \ref{fvv}, since 
$\V_\alpha=\Psi(\F_\alpha)\supseteq\F_\alpha\oplus\V_\alpha$.
By the same argument as used in the proof of the
previous proposition, it cannot be
$\F_{\alpha+1}\ne\F_\alpha$ for all ordinals.
So, we can define $\Theta(\F)=\Psi(\F_\alpha)$
where $\alpha$ is the least ordinal such that
$\F_{\alpha+1}=\F_\alpha$.
Let us verify that the ultrafilter $\Theta(\F)$ satisfies 
the desired properties. 
First of all, $\Theta(\F)=\Psi(\F_\alpha)\supseteq\F$.
Now notice that
$\V_\alpha\supseteq\F_\alpha(\V_\alpha,\V_\alpha)$,
as otherwise 
$\F_{\alpha+1}=\F_\alpha(\V_\alpha,\V_\alpha)$
and we would have $\F_\alpha\ne\F_{\alpha+1}$,
since $\V_\alpha\supseteq\F_\alpha$ but
$\V_\alpha\not\supseteq\F_{\alpha+1}$.
So, $\Theta(\F)=\Psi(\F_\alpha)=\V_\alpha$,
and by Corollary \ref{cor1},
we finally obtain that $\V_\alpha\oplus\V_\alpha=\V_\alpha$.
\end{proof}

\begin{corollary}
[\textsf{ZF}]
Assume there exists a choice function $\Phi$ that
associates to every additive filter $\F$
an ultrafilter $\Phi(\F)\supseteq\F$. 
Then for every additively large set $A$ there
exists an idempotent ultrafilter $\U$ where $A\in\U$.
\end{corollary}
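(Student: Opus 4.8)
The plan is to reduce everything to the two tools already in hand: the filter $\mathcal{FS}_X$ from Example~\ref{fsx}, and the choice function $\Theta$ produced by Theorem~\ref{additivetoidempotent}. First I would unfold the hypothesis on $A$: since $A$ is additively large, by definition there is an infinite set $X=\{x_1<x_2<\ldots\}$ with $\text{FS}(X)\subseteq A$. This is the only place where the combinatorial content of the statement enters, and it is immediate from the definition.

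Next I would bring in the additive filter attached to $X$. By Example~\ref{fsx}, the family $\mathcal{FS}_X$ is an additive filter containing $\text{FS}(X)$ (take the empty finite exception set $F=\emptyset$). Since filters are closed under supersets and $A\supseteq\text{FS}(X)\in\mathcal{FS}_X$, I conclude $A\in\mathcal{FS}_X$. Thus the additively large set $A$ has been repackaged as a member of a concrete additive filter, which is exactly the kind of object to which our machinery applies.

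Finally I would invoke Theorem~\ref{additivetoidempotent}. Its hypothesis is precisely the existence of the choice function $\Phi$ assumed here, so we obtain the choice function $\Theta$ assigning to each additive filter an idempotent ultrafilter extending it. Applying it to $\mathcal{FS}_X$ yields an idempotent ultrafilter $\U=\Theta(\mathcal{FS}_X)\supseteq\mathcal{FS}_X$, and since $A\in\mathcal{FS}_X\subseteq\U$ we get $A\in\U$, as required. I do not expect any genuine obstacle in this corollary: all the weight has been absorbed into Theorem~\ref{additivetoidempotent} and Example~\ref{fsx}, and the argument is essentially a bookkeeping composition of the two. The only point deserving a word of care is that the hypotheses match exactly, so that $\Theta$ is available under the stated $\mathsf{ZF}$-assumption on $\Phi$ without any further appeal to choice.
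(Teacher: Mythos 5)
Your proof is correct and follows exactly the paper's own argument: fix $X$ with $\mathrm{FS}(X)\subseteq A$, observe that $A$ belongs to the additive filter $\mathcal{FS}_X$ of Example~\ref{fsx}, and apply Theorem~\ref{additivetoidempotent} to extend $\mathcal{FS}_X$ to an idempotent ultrafilter containing $A$. The only difference is that you spell out the bookkeeping (closure under supersets, matching of hypotheses) that the paper leaves implicit.
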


\begin{proof}
Let $X$ be an infinite set
with $\text{FS}(X)\subseteq A$, and consider
the additive filter $\mathcal{FS}_X$ of Example \ref{fsx}.
Then $A\in\mathcal{FS}_X$ and, by the previous theorem, 
$\mathcal{FS}_X$ is included in an idempotent ultrafilter.
\end{proof}

In order to prove that every additive filter extends to an
idempotent ultrafilter, one does not need the full
axiom of choice, and indeed we will see that a weakened version
of the \emph{Ultrafilter Theorem} suffices. 

The result below was proved 
in \cite[Lemma 4(ii)]{hkt} as the outcome of a chain of results about
the relative strength of $\textsf{UT}(\R)$ with respect
to properties of the Tychonoff
products $2^{\mathcal{P}(\R)}$ and
$2^\R$, where $2=\{0,1\}$ has the discrete topology.\footnote
{~Precisely, a proof of Proposition \ref{bpi} is obtained by combining the 
following \textsf{ZF}-results: (a) $\textsf{UT}(\R)$ if and only 
$2^{\mathcal{P}(\R)}$ is compact; 
(b) $\beta\N$ embeds as a closed subspace of 
$2^{\R}$; (c) $\textsf{UT}(\R)$ implies that 
$2^\R$, and hence $\beta\N$, is both compact and Loeb.
Recall that a topological space is \emph{Loeb} if there exists
a choice function on the family of its nonempty closed subspaces.
Recall also that nonempty closed subspaces of $\beta\N$ correspond to filters
(see Remark \ref{fil-cl}).}
In order to keep our paper self-contained, we give here
an alternative direct proof where explicit topological
notions are avoided.

\begin{proposition}[$\textsf{ZF+UT}(\R)$]\label{bpi}
There exists a choice function $\Phi$ that
associates to every filter $\F$ on $\N$
an ultrafilter $\Phi(\F)\supseteq\F$.
\end{proposition}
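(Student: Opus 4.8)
The plan is to exhibit the desired choice function as a single point of a large product and to extract that point from the compactness of $2^{\PP(\R)}$, which by the equivalence recorded in the footnote is precisely $\textsf{UT}(\R)$. Giving $\Phi$ is the same as selecting, simultaneously for every filter $\F$ on $\N$, an ultrafilter $\Phi(\F)\supseteq\F$. Writing $\mathrm{Fil}$ for the set of all filters on $\N$, I would first note the bookkeeping that makes $\textsf{UT}(\R)$ applicable: since $\mathrm{Fil}\subseteq\PP(\PP(\N))$ and $|\PP(\N)|=|\R|$, the set $\PP(\N)\times\mathrm{Fil}$ injects into $\PP(\R)$, and this injection exhibits $2^{\PP(\N)\times\mathrm{Fil}}$ as a closed subspace of $2^{\PP(\R)}$ (set the coordinates outside the image to $0$). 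As a closed subspace of a compact space, $2^{\PP(\N)\times\mathrm{Fil}}$ is then compact in $\textsf{ZF}$.

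Reading a point $h\in 2^{\PP(\N)\times\mathrm{Fil}}$ as the family $h_\F=\{A\subseteq\N\mid h(A,\F)=1\}$ for each $\F$, I would set $C_\F$ to be the set of those $h$ for which $h_\F$ is an ultrafilter on $\N$ with $h_\F\supseteq\F$. Each ultrafilter axiom and each requirement $A\in h_\F$ for $A\in\F$ constrains only finitely many coordinates, so $C_\F$ is an intersection of clopen conditions and hence closed; moreover a point of $\bigcap_\F C_\F$ is exactly a choice function $\Phi$. The heart of the argument is the finite intersection property of $\{C_\F\}$: given $\F_1,\dots,\F_n$, I would use $\textsf{UT}(\N)$ — which follows from $\textsf{UT}(\R)$ by generating a filter on $\R$ from one on $\N$, extending it and restricting back — to know each of $\F_1,\dots,\F_n$ has some ultrafilter extension, and then invoke ordinary \emph{finite} choice (a $\textsf{ZF}$ theorem) to select $\U_1,\dots,\U_n$ with $\U_i\supseteq\F_i$. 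Choosing the coordinates of $h$ so that $h_{\F_i}=\U_i$ for each $i$, with $h_\F=\{A\subseteq\N\mid 1\in A\}$ on all remaining slices, produces a point of $C_{\F_1}\cap\dots\cap C_{\F_n}$, since only the slices indexed by $\F_1,\dots,\F_n$ are constrained there.

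Granting the finite intersection property, compactness of $2^{\PP(\N)\times\mathrm{Fil}}$ forces $\bigcap_\F C_\F\neq\emptyset$, and any $h$ in this intersection gives the sought function $\Phi(\F)=h_\F$. I expect the real difficulty to lie not in any single step but in the packaging: one must ensure that the appeal to compactness genuinely replaces the global choice it is meant to eliminate, which is why the finite intersection step may quote only finite choice and $\textsf{UT}(\N)$; and one must keep the index set of the product within $\PP(\R)$ in size, so that $\textsf{UT}(\R)$ — and not some stronger instance such as $\textsf{UT}(\PP(\R))$ — is all that is used. The remaining verifications, namely that the coordinate conditions cutting out $C_\F$ are clopen and that $\textsf{UT}(\R)$ yields $\textsf{UT}(\N)$, are routine.
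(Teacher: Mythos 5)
Your proof is correct, but it follows a genuinely different route from the paper's. You reduce everything to the \textsf{ZF}-equivalence of $\textsf{UT}(\R)$ with compactness of $2^{\PP(\R)}$ (item (a) of the paper's footnote, quoted from \cite{hkt}) and then run a closed-sets/finite-intersection-property argument inside the cube $2^{\PP(\N)\times\mathrm{Fil}}$, where $\mathrm{Fil}$ is the set of all filters on $\N$; the individual steps are all \textsf{ZF}-sound: the cardinality bookkeeping that keeps the index set within $\PP(\R)$, the clopenness of the conditions cutting out $C_\F$, the use of $\textsf{UT}(\N)$ (which indeed follows from $\textsf{UT}(\R)$) together with finite choice for the FIP, and the extraction of $\Phi$ from a single point $h\in\bigcap_\F C_\F$. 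The paper's proof is structurally the same argument with the topology stripped out and, crucially, with your imported equivalence re-proved rather than quoted: it applies $\textsf{UT}(\R)$ once to a filter $\G$ on $I=\Fin(\R)\times\Fin(\Fin(\R))$, where the explicitly constructed independent family $X(A,Y)$ (the finite intersection property of all patterns $\langle\BB\rangle$) is exactly the combinatorial content behind compactness of the cube; the families $\G_Y$ with their sets $\Lambda,\Gamma,\Delta$ play the role of your clopen conditions, the FIP of finite unions $\bigcup_{i}\G_{Y_i}$ is checked via $\textsf{UT}(\N)$ just as you do it, and the resulting ultrafilter $\UU$ is your point $h$, via $h(A,Y)=1\Leftrightarrow X(A,Y)\in\UU$. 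What your approach buys is brevity and conceptual transparency (a simultaneous choice is literally one point of a compact cube), and it is even leaner than the proof chain sketched in the footnote, since you never need the Loeb property of $\beta\N$; the price is that it rests on the nontrivial cited theorem $\textsf{UT}(\R)\Rightarrow 2^{\PP(\R)}$ compact, whose standard proof is precisely the independent-family construction. What the paper's version buys is self-containment: it does by hand, in ultrafilter language, the work your citation hides.
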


\begin{proof}
Every filter is an element of $\mathcal{P}(\mathcal{P}(\N))$,
which is in bijection with $\mathcal{P}(\R)$.
So, in $\textsf{ZF}$, one has a 1-1 enumeration of 
all filters $\{\F_Y\mid Y\in\FF\}$ for a suitable family
$\FF\subseteq\mathcal{P}(\R)$.
Fix a bijection $\psi:\mathcal{P}(\N)\times\mathcal{P}(\R)\to\mathcal{P}(\R)$,
let $I=\text{Fin}(\R)\times\text{Fin}(\text{Fin}(\R))$ (where for a set $X$, $\text{Fin}(X)$ denotes the set of finite subsets of $X$),
and for every $(A,Y)\in \mathcal{P}(\N)\times\mathcal{P}(\R)$, let
$$X(A,Y)\ =\ \{(F,S)\in I\mid S\subseteq\mathcal{P}(F);\ \psi(A,Y)\cap F\in S\}.$$
Notice that for every 
$\BB\subseteq\mathcal{P}(\N)\times\mathcal{P}(\R)$,
the family
$$\langle\BB\rangle\ =\ 
\{X(A,Y)\mid (A,Y)\in\BB\}\cup\{X(A,Y)^c\mid (A,Y)\notin\BB\}$$
has the finite intersection property.
Indeed, given pairwise distinct $(A_1,Y_1),\ldots$, $(A_k,Y_k)\in\BB$ 
and pairwise distinct $(B_1,Z_1),\ldots,(B_h,Z_h)\notin\BB$, 
for every $i,j$ pick an element
$u_{i,j}\in \psi(A_i,Y_i)\triangle\psi(B_j,Z_j)$ 
(where $\triangle $ denotes the symmetric difference of sets).
If we let 
$$F=\{u_{i,j}\mid i=1,\ldots,k;\  j=1,\ldots,h\}\quad\text{and}\quad
S=\{\psi(A_i,Y_i)\cap F\mid i=1,\ldots,k\},$$ 
then it is readily seen that $(F,S)\in \bigcap_{i=1}^{k}X(A_i,Y_i)\cap
\bigcap_{j=1}^{h} X(B_j,Z_j)^c$.

For every $Y\in\FF$, let us now consider the following
family of subsets of $I$:
$$\G_Y\ =\ \{X(A,Y)\mid A\in\F_Y\}\cup
\{\Lambda(A,B,Y)\mid A,B\subseteq\N\}\cup
\{\Gamma(A,Y)\mid A\subseteq\N\}$$
$$\cup\,\{\Delta(A,B,Y)\mid A\subseteq B\subseteq\N\}$$
where

\begin{itemize}
\item
$\Lambda(A,B,Y)=X(A,Y)^c\cup X(B,Y)^c\cup X(A\cap B,Y)$\,;
\item
$\Gamma(A,Y)=X(A,Y)\cup X(A^c,Y)$\,;
\item
$\Delta(A,B,Y)=X(A,Y)^{c}\cup X(B,Y)$.
\end{itemize}
We want to show that every finite union
$\bigcup_{i=1}^k\G_{Y_i}$ where $Y_i\in\FF$
has the finite intersection
property, and hence also $\G=\bigcup_{Y\in\FF}\G_Y$
has the finite intersection property.
By $\textsf{UT}(\N)$, 
which follows from $\textsf{UT}(\R)$,
we can pick ultrafilters $\V_i\supseteq \F_{Y_i}$ for $i=1,\ldots,k$.
Then
$$\H\ =\ \bigcup_{i=1}^k\left(\{X(A,Y_i)\mid A\in\V_i\}\cup
\{X(A,Y_i)^c\mid A\notin\V_i\}\right)$$
has the finite intersection property, because
$\H\subset\langle\BB\rangle$
where $\BB=\{(A,Y_i)\mid 1\le i\le k;A\in \V_i\}$.
Now let $G_1,\ldots,G_h\in\bigcup_{i=1}^k\G_{Y_i}$.
For every $G_j$ pick
$H_j\in\H$ such that $H_j\subseteq G_j$ as follows.
If $G_j=X(A,Y_i)$ for some $A\in\F_{Y_i}$
then let $H_j=G_j$; if $G_j=\Lambda(A,B,Y_i)$ then
let $H_j=X(A,Y_i)^c$ if $A\notin\V_i$, 
let $H_j=X(B,Y_i)^c$
if $A\in\V_i$ and $B\notin\V_i$,
and let $H_j=X(A\cap B,Y_i)$ if $A,B\in\V_i$;
if $G_j=\Gamma(A,Y_i)$ then let
$H_j=X(A,Y_i)$ if $A\in\V_i$,
and let $H_j=X(A^c,Y_i)$ if $A\notin\V_i$;
and if $G_{j}=\Delta(A,B,Y_{i})$ (where $A\subseteq B$) then let $H_j=X(B,Y_{i})$ if
$A\in\V_{i}$ or $B\in\V_{i}$, and let $H_{j}=X(A,Y_{i})^{c}$ if
$A\not\in\V_{i}$ and $B\not\in\V_{i}$.
But then $\bigcap_{j=1}^h G_j$ is nonempty
because it includes $\bigcap_{j=1}^h H_j$
and the family $\H$ has the finite intersection property.

Since $I=\text{Fin}(\R)\times\text{Fin}(\text{Fin}(\R))$ is
in bijection with $\R$, by $\textsf{UT}(\R)$ there exists an 
ultrafilter $\UU\supseteq\G$. Finally, 
for every $Y\in\FF$, the family 
$$\U_Y\ =\ \{A\subseteq\N\mid
X(A,Y)\in\UU\}$$
is an ultrafilter that extends $\F_Y$.
Indeed, if $A\in\F_Y$ then $X(A,Y)\in\G_Y\subseteq\UU$,
and so $A\in\U_Y$. Now assume $A,B\in\U_Y$, \emph{i.e.}
$X(A,Y),X(B,Y)\in\UU$. 
Since $\Lambda(A,B,Y)\in\G_Y\subseteq\UU$, we have
$X(A\cap B,Y)=\Lambda(A,B,Y)\cap X(A,Y)\cap X(B,Y)\in\UU$,
and so $A\cap B\in\U_Y$. Now let $A\in\U_{Y}$ and also let $B\supseteq A$.
Since $\Delta(A,B,Y)\in\G_{Y}\subseteq\UU$, we have $X(A,Y)\cap\Delta(A,B,Y)\in\UU$.
Furthermore, $X(B,Y)\supseteq X(A,Y)\cap X(B,Y)=
X(A,Y)\cap\Delta(A,B,Y)$, thus $X(B,Y)\in\UU$, and consequently $B\in\U_{Y}$.
Now let $A\subseteq\N$. If $A\notin\U_Y$, \emph{i.e.} if
$X(A,Y)\notin\UU$, then
$X(A,Y)^c\in\UU$. But $\Gamma(A,Y)\in\G_Y\subseteq\UU$,
so $X(A^c,Y)\supseteq\Gamma(A,Y)\cap X(A,Y)^c\in\UU$,
and hence $A^c\in\U_Y$. Clearly, the correspondence $\F_Y\mapsto\U_Y$
yields the desired choice function.
\end{proof}

\begin{remark}
\label{rem:Prop3_4}
In $\textsf{ZF}$, the property that ``there exists a choice function $\Phi$ that
associates to every filter $\F$ on $\N$
an ultrafilter $\Phi(\F)\supseteq\F$" is equivalent to 
the property that ``$\beta\N$ is compact and Loeb" 
(see \cite[Proposition 1(ii)]{hkt}). We remark that the
latter statement is strictly weaker than $\mathsf{UT}(\R)$ in 
$\mathsf{ZF}$ (see \cite[Theorem 10]{kk}).  
\end{remark} 

By putting together Proposition \ref{bpi} with
Theorem \ref{additivetoidempotent}, one obtains:

\begin{theorem}[$\textsf{ZF}+\textsf{UT}(\R)$]\label{main}
Every additive filter can be extended
to an idempotent ultrafilter.
\end{theorem}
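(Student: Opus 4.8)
The plan is to derive the statement directly by chaining the two results that immediately precede it, so that essentially all of the combinatorial and recursion-theoretic work has already been carried out. The only hypothesis beyond $\textsf{ZF}$ that I would invoke is $\textsf{UT}(\R)$, and it enters at a single point.

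First I would apply Proposition \ref{bpi}: working in $\textsf{ZF}+\textsf{UT}(\R)$, this yields a choice function $\Phi$ that assigns to every filter $\F$ on $\N$ an ultrafilter $\Phi(\F)\supseteq\F$. In particular, restricting $\Phi$ to the (sub)class of additive filters, I obtain a choice function that associates to every additive filter an ultrafilter extending it. This restriction is exactly the hypothesis required by Theorem \ref{additivetoidempotent}.

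Next I would feed this $\Phi$ into Theorem \ref{additivetoidempotent}, which operates purely in $\textsf{ZF}$: from the existence of such a $\Phi$ it produces a choice function $\Theta$ that associates to every additive filter $\F$ an idempotent ultrafilter $\Theta(\F)\supseteq\F$. Given any additive filter $\F$, the ultrafilter $\Theta(\F)$ is then an idempotent ultrafilter with $\Theta(\F)\supseteq\F$, which is precisely the desired conclusion.

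Since both ingredients are already established, there is no genuine obstacle remaining in this final step; the entire delicate content lies upstream. If I had to single out the one point deserving attention, it would be verifying that the choice function furnished by Proposition \ref{bpi} — defined on \emph{all} filters on $\N$ — does indeed meet the hypothesis of Theorem \ref{additivetoidempotent}, which asks only for a choice function defined on additive filters; this holds immediately by restriction, and it is the seam where the two independently proved results are glued together. I would also note, for the record, that the transfinite recursions internal to Theorem \ref{additivetoidempotent} (and to Proposition \ref{step-one} feeding it) terminate by the Replacement schema rather than by any appeal to \emph{Zorn's Lemma}, so that no choice principle beyond the $\textsf{UT}(\R)$ consumed in Proposition \ref{bpi} is ever used.
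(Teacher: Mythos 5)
Your proposal is correct and is essentially identical to the paper's own proof: the paper derives Theorem \ref{main} precisely by combining Proposition \ref{bpi} (where $\textsf{UT}(\R)$ is consumed) with Theorem \ref{additivetoidempotent} (a pure \textsf{ZF} result). The gluing point you single out --- restricting the choice function from all filters on $\N$ to the additive ones --- is indeed the only step needed, and it is immediate.
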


\begin{remark}
Since every idempotent filter $\F\subseteq\F\oplus\F$ is readily seen to be
additive, as a straight corollary we obtain Paparyan's result \cite{p}
that every maximal idempotent filter is an idempotent ultrafilter.
\end{remark}

\begin{remark}
The above Theorem \ref{main}
cannot be proved by \textsf{ZF} alone.
Indeed, since the Fr\'{e}chet filter
$\{A\subseteq\N\mid \N\setminus A \text{ is finite}\}$
is additive, one would obtain the existence of a non-principal 
ultrafilter on $\N$ in \textsf{ZF}, against 
the well-known fact that there exist models of \textsf{ZF}
with no non-principal ultrafilters on $\N$ (see \cite{hr}).
\end{remark}

We conclude this section by showing an example of an additive filter
$\F$ which is not idempotent, \emph{i.e.}, $\F\not\subseteq\F\oplus\F$.

Recall that a nonempty family $\Ps\subseteq\PP(\N)$ is \emph{partition regular}
if in every finite partition $A=C_1\cup\ldots\cup C_n$ 
where $A\in\Ps$, one of the pieces $C_i\in\Ps$;
it also assumed that $\Ps$ is closed under supersets,
\emph{i.e.}, $A'\supseteq A\in\Ps\Rightarrow A'\in\Ps$.
In this case, the dual family 
$$\Ps^*\ =\ 
\{A\subseteq\N\mid A^c\notin\Ps\}\ =\ 
\{A\subseteq\N\mid A\cap B\ne\emptyset\ \text{ for every }B\in\Ps\}$$
is a filter; moreover, by assuming $\textsf{UT}(\N)$, one has 
$\Ps^*=\bigcap\{\U\in\beta\N\mid \U\supseteq\Ps\}$.
All these facts follow from the definitions in
a straightforward manner (see, \emph{e.g.},
\cite[Theorem 3.11]{hs} or \cite{bh}).

Call \emph{finitely additively large} (\emph{FAL} for short) 
a set $A\subseteq\N$ such that for every $n\in\N$ there exist
$x_1<\ldots<x_n$ with $\text{FS}(\{x_i\}_{i=1}^{n})\subseteq A$.
Clearly every additively large set is FAL, but not
conversely; \emph{e.g.}, the set 
$A=\bigcup_{k\in\N} \text{FS}(\{2^i\mid 2^{k-1}\le i<2^k\})$
is FAL but not additively large.\footnote
{~This example is mentioned in
\cite{bh2}, p. 4499; see also \cite[Theorem 1.12]{bh}, where 
FAL sets are called $\text{IP}_{<\omega}$ sets.}

\begin{example}\label{additivenotidem}
($\mathsf{ZF}$). The following family is an additive filter which is \emph{not} idempotent:
$$\F\ =\ \{A\subseteq\N\mid A^c\ \text{is not FAL}\}.$$

First of all, the dual family $\F$ is a filter because
the family of FAL sets is partition regular. Recall that the latter 
property is a consequence of \emph{Folkman's Theorem}
in its finite version:
``For every $n$ and for every $r$ there exists $N$
such that for every $r$-coloring $\{1,\ldots,N\}=C_1\cup\ldots\cup C_r$
there exists a set $S$ of cardinality $n$ with $\text{FS}(S)$ monochromatic.''
(For a $\textsf{ZF}$-proof of Folkman's Theorem, see
\cite[Theorem 11, Lemma 12]{grs}, pp. 81--82.)

In order to show that $\F$ is additive, fix any ultrafilter
$\V\supseteq\F$; we want to see that $\F\subseteq\F\oplus\V$.
Notice that every $B\in\V$ is FAL, as otherwise
$B^c\in\F\subseteq\V$ and we would have $\emptyset=B\cap B^c\in\V$.
By the definitions,
if $A\notin\F\oplus\V$ then
$A_\V=\{n\mid A-n\in\V\}\not\in\F$, \emph{i.e.},
$(A_\V)^c=\{n\mid A^c-n\in\V\}$ is FAL.
Then for every $n$ there exist 
$x_1<\ldots<x_n$ such that 
$A^c-s\in\V$ for every $s\in\text{FS}(\{x_i\}_{i=1}^{n})$.
Since the finite intersection 
$B=\bigcap\{A^c-s\mid s\in\text{FS}(\{x_i\}_{i=1}^{n})\}$ also belongs to $\V$,
we can pick $y_1<\ldots<y_n$ where $y_1>x_1+\ldots+x_n$
such that $\text{FS}(\{y_i\}_{i=1}^{n})\subseteq B$.
It is readily verified that $x_1+y_1<\ldots<x_n+y_n$
and that $\text{FS}(\{x_i+y_i\}_{i=1}^{n})\subseteq A^c$.
This shows that $A^c$ is FAL, and hence $A\not\in\F$.

Let us now check that the filter $\F$ is \emph{not} idempotent. 
To this end, we need some preliminary work.
Denote by $\N_0=\N\cup\{0\}$, and let $\psi:\Fin(\N_0)\to\N_0$ 
be the bijection where $\psi(F)=\sum_{i\in F}2^i$ for $F\ne\emptyset$
and $\psi(\emptyset)=0$.
Notice that $\psi(F)+\psi(G)=\psi(F\triangle G)+2\cdot\psi(F\cap G)$,
and also observe that for every $H$ one has
$2\cdot\psi(H)=\psi(1+H)$ where $1+H=\{1+h\mid h\in H\}$.
In consequence, if $\E$ is the set of even natural numbers, 
the following property is easily checked:
\begin{itemize}
\item[($\star)$]
\emph{Let $F,G\in\Fin(\E)$. Then $\psi(F)+\psi(G)=\psi(H)$ for some 
$H\in\text{Fin}(\E)$
if and only if $F\cap G=\emptyset$ and $F\cup G=H$.}
\end{itemize}

Now fix a partition $\E=\bigcup_{n\in\N_0}A_n$ of the
even natural numbers into infinitely many infinite sets, and define
$$X\ =\ \left\{\psi(F\cup G)\mid \emptyset\ne F\in\Fin(A_0)\ \&\
\emptyset\ne G\in\Fin(A_{\psi(F)})\right\}.$$
We will see that $X^c\in\F$ and $X^c\notin\F\oplus\F$,
thus showing that $\F\not\subseteq\F\oplus\F$.

The first property follows from the fact that there are no
triples $a,b,a+b\in X$, and hence $X$ is not FAL.
To see this, assume by contradiction that 
$\psi(F_1\cup G_1)+\psi(F_2\cup G_2)=\psi(F_3\cup G_3)$
for suitable nonempty  $F_1,F_2,F_3\in\Fin(A_0)$ and $G_i\in\Fin(A_{\psi(F_i)})$.
By the above property $(\star)$, it follows that
$(F_1\cup G_1)\cap(F_2\cup G_2)=\emptyset$, and hence
$F_1\cap F_2=\emptyset$; moreover,
$(F_1\cup G_1)\cup(F_2\cup G_2)=F_3\cup G_3$,
and hence $F_1\cup F_2=F_3$ and $G_1\cup G_2=G_3$.
This is not possible because $F_1\cap F_2=\emptyset$ implies that
$F_1,F_2\ne F_3$, and so 
$(G_1\cup G_2)\cap G_3=\emptyset$.

By the definitions, $X^c\not\in\F\oplus\F$ if and only if
$\Xi=\{n\mid X^c-n\in\F\}\not\in\F$ if and only if
$\Xi^c=\{n\mid X-n\ \text{is FAL}\}$ is FAL,
and this last property is true. Indeed, for every nonempty
$F\in\Fin(A_0)$ and for every nonempty $G\in\Fin(A_{\psi(F)})$,
we have that $F\cap G=\emptyset$ and so $\psi(F\cup G)=\psi(F)+\psi(G)$.
In consequence, the set 
$X-\psi(F)\supseteq\{\psi(G)\mid \emptyset\ne G\in\Fin(A_{\psi(F)})\}=
\text{FS}(A_{\psi(F)})$ is additively large, and hence FAL.
But then also $\Xi^c\supseteq\{\psi(F)\mid \emptyset\ne F\in\Fin(A_0)\}=
\text{FS}(A_0)$ is FAL because it is additively large, as desired.
\end{example}

\begin{remark}
The above example is fairly related to 
Example 2.8 found in P. Krautzberger's thesis \cite{k};
however there are relevant differences.
Most notably, besides the fact that different semigroups are considered, 
our example is carried within $\textsf{ZF}$, whereas the proof in
\cite{k} requires certain weak forms of the axiom of choice. 
Let us see in more detail.

In \cite{k} one first considers a \emph{partial} semigroup
$(\mathbb{F},\cdot)$ on the family $\mathbb{F}$ 
of finite subsets of $\N$ where the partial operation is 
defined by means of disjoint unions,
and then the corresponding semigroup 
of ultrafilters $(\delta\mathbb{F},\cdot)$ where 
$\delta\mathbb{F}$ is a suitable closed subspace of $\beta\mathbb{F}$.
(See \cite[Definition 1.4]{k} for details.)
By \emph{Graham-Rothschild parameter-sets Theorem} \cite{gr},
the family of sets that contain arbitrarily large finite union sets\footnote
{~A \emph{finite union set} is a set of the form
$\text{FU}(X)=\{\bigcup_{F\in\X} F\mid \emptyset\ne \X\in\text{Fin}(X)\}$.}
is partition regular, and so the following closed set is nonempty:
$$H\ =\ \{\U\in\delta\mathbb{F}\mid
(\forall A\in\U)(\forall n\in\N)(\exists x_1<\ldots <x_{n})\;
\text{FU}(\{x_{i}\}_{i=1}^{n})\subseteq A\}.$$ 
(Notice that $\textsf{UT}(\N)$ suffices to prove $H\ne\emptyset$; 
indeed, any ultrafilter on $\mathbb{F}$
extending the filter $\{A\subseteq\mathbb{F}\mid A^{c} 
\text{ does not contain arbitrarily large union sets}\}$
is in $H$.)
It is then shown that $H$ is a sub-semigroup
and that the filter 
$$\H\ =\ \text{Fil}(H)=\bigcap\{\U\mid\U\in H\}$$
is not idempotent. This last property is proved by showing the
existence of an injective sequence of ultrafilters 
$\langle\U_n\mid n\in\N\rangle$ in $H$ 
whose limit $\U$-$\lim_n(n+\U_n)\notin H$ for a suitable $\U\in H$;
notice that here countably many choices are made.\footnote
{~More precisely, for every $n\in\N$ one picks an ultrafilter $\U_n\in H$ 
that contains a suitable set $A_n\in\H$. (See \cite{k} for details.)
In view of Proposition \ref{bpi}, instead of
countable choice one could assume $\textsf{UT}(\R)$ 
to get such a sequence.}

It is well-known that partition regularity results about finite unions
can be (almost) directly translated into partition regularity results about finite sums,
and conversely (see, \emph{e.g.}, \cite[Theorem 13]{grs} and \cite[pp.113-114]{hs}).
Along these lines, our Example \ref{additivenotidem} can be
seen as a translation of the above example to $(\beta\N,\oplus)$.
We remark that, besides some non-trivial adjustments,
we paid attention not to use any form of choice; to this end,
we directly considered the dual filter 
$$\F\ =\ \{A\subseteq\N\mid A^c\ \text{is not FAL}\}.$$
instead of the corresponding closed sub-semigroup 
$\{\U\in\beta\N\mid(\forall A\in\U)(A\text{ is FAL})\}.$
\end{remark}

\section{Final remarks and open questions}

By only assuming a weaker property for a filter $\F$
than additivity,  one can prove
that every set $A\in\F$ is finitely additively large.

\begin{proposition}[\textsf{ZF}]\label{weak}
Let $\F$ be filter, and assume that there exists
an ultrafilter $\V\supseteq\F$ such $\F\subseteq\F\oplus\V$.
Then for every $A\in\F$ and for every $k$
there exist $k$-many elements $x_1<\ldots<x_k$ such
that $\mathrm{FS}(\{x_i\}_{i=1}^{n})\subseteq A$.
\end{proposition}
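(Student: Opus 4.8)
The plan is to argue by induction on $k$, carrying out a bounded version of the finite-sums construction given earlier for idempotent ultrafilters. The only consequence of the hypothesis I will use is that, since $\F\subseteq\F\oplus\V$, one has $A_\V\in\F$ for every $A\in\F$; as $\F$ is a filter and $\F\subseteq\V$, this gives that the ``starred'' set $A^\star=A\cap A_\V$ again belongs to $\F$ (and hence to $\V$). First I would dispose of the case in which $\V$ is principal, say $\V=\U_p$. Here $A_\V=A-p$, so the hypothesis $A-p\in\F$ applied repeatedly, together with $\F\subseteq\U_p$, yields $(n+1)p\in A$ for all $n\ge 0$; thus $p\N\subseteq A$, so $A$ is even additively large and \emph{a fortiori} FAL. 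From now on I assume $\V$ non-principal, so that every set in $\V$ is infinite.

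For the induction, the base case $k=1$ is immediate: $A\in\F\subseteq\V$ is nonempty, so any $x_1\in A$ works. For the inductive step, suppose the statement holds for $k$ (for every member of $\F$), and let $A\in\F$. Applying the inductive hypothesis to $A^\star=A\cap A_\V\in\F$, I obtain $x_1<\dots<x_k$ with $\text{FS}(\{x_i\}_{i=1}^k)\subseteq A^\star\subseteq A\cap A_\V$. In particular every $s\in\text{FS}(\{x_i\}_{i=1}^k)$ lies in $A_\V$, that is $A-s\in\V$. Consider then
$$B\ =\ A\cap\bigcap_{s\in\text{FS}(\{x_i\}_{i=1}^k)}(A-s),$$
a finite intersection of members of $\V$, hence $B\in\V$. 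Since $\V$ is non-principal, $B$ is infinite and I may pick $x_{k+1}\in B$ with $x_{k+1}>x_k$.

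It remains to check that $\text{FS}(\{x_i\}_{i=1}^{k+1})\subseteq A$. Every sum not involving $x_{k+1}$ already lies in $\text{FS}(\{x_i\}_{i=1}^{k})\subseteq A$; the singleton $x_{k+1}$ lies in $B\subseteq A$; and every remaining sum has the form $s+x_{k+1}$ with $s\in\text{FS}(\{x_i\}_{i=1}^{k})$, which lies in $A$ precisely because $x_{k+1}\in A-s$ by the definition of $B$. This exhausts all elements of $\text{FS}(\{x_i\}_{i=1}^{k+1})$ and completes the step. Since for fixed $A$ and $k$ the construction makes only finitely many choices, the whole argument goes through in $\textsf{ZF}$.

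I expect the only genuinely delicate point to be the requirement that the newly chosen element can be taken strictly above $x_k$: this is where non-principality of $\V$ is essential, and it is exactly the reason the principal case has to be treated on its own (there it is handled for free, since $p\N\subseteq A$). The finite-sum bookkeeping in the inductive step is routine but must be written out carefully to confirm that adjoining a single element from $B$ produces \emph{all} the new finite sums and nothing outside $A$.
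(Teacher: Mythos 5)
Your proof is correct, and at bottom it is the same Galvin--Glazer-style greedy construction as the paper's: the principal case is disposed of separately in the same way (every member of $\F$ absorbs all multiples of the generator), and non-principality of $\V$ is used exactly where you flagged it, to take each new element strictly larger than the previous ones. The difference is in the organization. The paper fixes $k$ in advance, introduces the iterated pseudo-sums $\V^2=\V\oplus\V$ and $\V^3=\V\oplus\V\oplus\V$, notes that $\F\subseteq\F\oplus\V$ forces $A_\V,A_{\V^2},A_{\V^3}\in\F$, picks $x_1$ from the intersection $A\cap A_\V\cap A_{\V^2}\cap A_{\V^3}\in\F$, and then runs a forward construction in which each later element is chosen from an explicitly listed member of $\V$; it writes this out only for $k=4$ and appeals to ``the same argument'' for general $k$. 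Your induction on $k$, with the inductive hypothesis applied to $A^\star=A\cap A_\V$, replaces the pseudo-powers entirely: unfolding your recursion, the innermost pick is made from exactly the paper's intersection $A\cap A_\V\cap\cdots\cap A_{\V^{k-1}}$ (since $(A\cap A_\V)_\V=A_\V\cap A_{\V^2}$, and so on), and your sets $B$ coincide with the paper's intersections at the later stages. What the inductive packaging buys is a uniform, fully rigorous treatment of arbitrary $k$, with no need to define $\V^j$ or to verify $A_{\V^j}\in\F$; what the paper's unrolled version buys is an explicit display of the invariants maintained at each step. Both versions make only finitely many choices, so both are equally \textsf{ZF}-compliant.
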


\begin{proof}
Let $\V$ be an ultrafilter as given by the hypothesis.
If $\V\supseteq\F$ is principal, say generated by $m\in\N$, then
$A\in\F\Rightarrow m\in A$.
Moreover, since $\F\subseteq\F\oplus\V$,
we also have that $A\in\F\Rightarrow A_\V=A-m\in\F$.
But then every $A\in\F$ contains all multiples $hm$ for $h\in\N$,
and the thesis trivially follows.
So, let us assume that $\V$ is non-principal.

For the sake of simplicity, here we will only consider the case $k=4$;
for arbitrary $k$, the proof is obtained by the same argument.
Notice first that, since $\F\subseteq\F\oplus\V$,
we have that $A\in\F\Rightarrow A_\V\in\F$, and hence
also $A_{\V^2},A_{\V^3}\in\F$, where we
denoted $\V^2=\V\oplus\V$ and $\V^3=\V\oplus\V\oplus\V$.

\begin{itemize}
\item
Pick $x_1\in A\cap A_\V\cap A_{\V^2}\cap A_{\V^3}\in\F$.
\end{itemize}

Then $x_1\in A$, and $A-x_1,A_\V-x_1,A_{\V^2}-x_1\in\V$.

\begin{itemize}
\item
Pick $x_2\in A\cap(A-x_1)\cap A_\V\cap (A_\V-x_1)
\cap A_{\V^2}\cap(A_{\V^2}-x_1)\in\V$.
As $\V$ is non-principal, we can take $x_2>x_1$.
\end{itemize}

Then $x_2,x_2+x_1\in A$, and
$A-x_2,A-x_1-x_2,A_\V-x_2,A_\V-x_1-x_2\in\V$.

\begin{itemize}
\item
Pick $x_3\in A\cap(A-x_1)\cap(A-x_2)\cap(A-x_1-x_2)\cap 
A_\V\cap (A_\V-x_1)\cap(A_{\V}-x_2)\cap(A_\V-x_1-x_2)\in\V$.
We can take $x_3>x_2$.
\end{itemize}

Then $x_3,x_3+x_1,x_3+x_2,x_3+x_2+x_1\in A$ and
$A-x_3,A-x_1-x_3,A-x_2-x_3,A-x_1-x_2-x_3\in\V$.

\begin{itemize}
\item
Pick $x_4\in A
\cap(A-x_1)
\cap(A-x_2)\cap(A-x_3)\cap
(A-x_1-x_2)\cap(A-x_1-x_3)\cap (A-x_2-x_3)
\cap(A-x_1-x_2-x_3)\in\V$.
We can take $x_4>x_3$.
\end{itemize}

We finally obtain that $\text{FS}(\{x_1<x_2<x_3<x_4\})\subseteq A$.
\end{proof}

Notice that, by combining Theorem \ref{main}
with the fact that every set in
an idempotent filter is additively large,
one obtains the following stronger property.

\begin{proposition}[$\textsf{ZF}+\textsf{UT}(\R)$]\label{additivehindman}
Let $\F$ be an additive filter. Then every $A\in\F$
is additively large.
\end{proposition}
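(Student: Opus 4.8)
The plan is to derive the proposition directly from the two ingredients highlighted in the sentence preceding the statement: the extension result Theorem~\ref{main} and the elementary fact that every member of an idempotent ultrafilter is additively large. Both are already in hand, so the argument will be short and essentially a matter of assembly.

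First I would fix an additive filter $\F$ and an arbitrary $A\in\F$. Working in $\textsf{ZF}+\textsf{UT}(\R)$, Theorem~\ref{main} supplies an idempotent ultrafilter $\U\supseteq\F$, and since $A\in\F\subseteq\U$ we have $A\in\U$. It then remains to invoke the combinatorial property recalled (in $\textsf{ZF}$) just before Example~\ref{fsx}: whenever $\U\oplus\U=\U$, every $A\in\U$ contains a set $\text{FS}(X)$ for some infinite $X$. Applying this to our $\U$ and $A$ yields that $A$ is additively large, as desired.

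The only point that deserves attention is to confirm that no choice principle beyond $\textsf{UT}(\R)$ creeps into the argument. The extension step is the sole place where $\textsf{UT}(\R)$ is used. The combinatorial step, by contrast, is a genuine $\textsf{ZF}$-theorem: the witnessing sequence $x_1<x_2<\cdots$ is built by repeatedly selecting an element from a nonempty subset of $\N$, and each such selection can be made canonically (for instance, by taking the least element), so no appeal to choice is required. I expect this bookkeeping to be the only mild obstacle; the substance of the proposition is packaged entirely within the previously established results, and the proof is therefore immediate.
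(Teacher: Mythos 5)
Your proof is correct and is precisely the argument the paper intends: the paper states this proposition without a separate proof, presenting it as the immediate combination of Theorem~\ref{main} with the \textsf{ZF}-fact (recalled before Example~\ref{fsx}) that every member of an idempotent ultrafilter is additively large. Your added remark that the witnessing sequence in that combinatorial lemma can be built by always taking least elements, so that no choice is needed, is a correct and worthwhile confirmation of its \textsf{ZF} status.
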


Another corollary of Theorem \ref{main} is the following:

\begin{proposition}[$\textsf{ZF}+\textsf{UT}(\R)$]\label{extendingadditive}
Let $\F$ be an additive filter. Then for every $B\subseteq\N$
there exists an additive filter $\G\supseteq\F$ such that
either $B\in\G$ or $B^c\in\G$.
\end{proposition}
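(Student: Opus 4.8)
The plan is to derive this directly from Theorem~\ref{main}, treating it as a quick corollary. Given an additive filter $\F$ and an arbitrary set $B\subseteq\N$, I would first invoke Theorem~\ref{main} to extend $\F$ to an idempotent ultrafilter $\U\supseteq\F$. The claim will be that $\G=\U$ itself is the desired additive filter.

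The one point that needs checking is that every idempotent ultrafilter is an additive filter. Recall that a filter $\F$ is additive precisely when $A_\V\in\F$ for every $A\in\F$ and every ultrafilter $\V\supseteq\F$. Now if $\U$ is an ultrafilter, then by maximality the only ultrafilter extending $\U$ is $\U$ itself; hence the additivity condition for $\U$ collapses to the single requirement $\U\oplus\U\supseteq\U$. This holds exactly because $\U=\U\oplus\U$ is idempotent. Thus $\U$ qualifies as an additive filter, and of course $\F\subseteq\U$.

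Finally, since $\U$ is an ultrafilter it decides $B$, so either $B\in\U$ or $B^c\in\U$. Taking $\G=\U$ therefore yields an additive filter with $\F\subseteq\G$ enjoying the required property. I do not expect any genuine obstacle here: all the substance has been absorbed into Theorem~\ref{main}, and the only thing to verify is the immediate observation that idempotent ultrafilters are additive. (One could alternatively phrase the statement as saying that the additive filters deciding $B$ form a nonempty family, witnessed by any idempotent ultrafilter extending $\F$.)
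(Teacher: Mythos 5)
Your proposal is correct and follows exactly the route the paper intends: the paper states this proposition as an immediate corollary of Theorem~\ref{main}, and your argument supplies the (implicit) details, namely that an idempotent ultrafilter $\U\supseteq\F$ is itself additive because the only ultrafilter extending $\U$ is $\U$, so additivity reduces to $\U\subseteq\U\oplus\U=\U$. The verification that idempotent ultrafilters are additive is sound, and taking $\G=\U$ settles the claim since an ultrafilter decides every $B\subseteq\N$.
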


Since, in \textsf{ZF}, every element of an idempotent ultrafilter 
is additively large, it may be possible that the last two propositions
above are also \textsf{ZF}-results. With regard to this,
let us recall that also Hindman's Theorem is a theorem
of \textsf{ZF}, although
this was established only indirectly by a model-theoretic argument
(see \S 4.2 of \cite{c}), and as yet, no explicit 
\textsf{ZF}-proof of Hindman's Theorem is available.

\smallskip
\begin{enumerate}
\item
Is Proposition \ref{additivehindman}
provable in \textsf{ZF}?

\smallskip
\item
Is Proposition \ref{extendingadditive}
provable in \textsf{ZF}?
\end{enumerate}


\smallskip
Let us now consider the following statements:

\begin{enumerate}
\item[$(a)$]
``Every additive filter can be extended to an idempotent ultrafilter.''

\item[$(b)$]
``Every idempotent filter can be extended to an idempotent ultrafilter.''

\smallskip
\item[$(c)$]
``There exists an idempotent ultrafilter on $\N$.''

\smallskip
\item[$(d)$]
``There exists a non-principal ultrafilter on $\N$.''
\end{enumerate}
\smallskip

In the previous section, we showed in \textsf{ZF} that 
$\textsf{UT}(\mathbb{R})\Rightarrow (a)$ and noticed
that $(a)\Rightarrow(b)\Rightarrow(c)\Rightarrow(d)$.
We also recalled that $(d)$ cannot be proved by \textsf{ZF} alone.
These facts suggest to investigate whether
any of the above implications can be reversed.

\begin{enumerate}
\item[(3)]
Does \textsf{ZF} prove that $(a)\Rightarrow\textsf{UT}(\mathbb{R})$?

\smallskip
\item[(4)]
Does \textsf{ZF} prove that $(b)\Rightarrow(a)$?

\smallskip
\item[(5)]
Does \textsf{ZF} prove that $(c)\Rightarrow(b)$?

\smallskip
\item[(6)]
Does \textsf{ZF} prove that $(d)\Rightarrow(c)$?
\end{enumerate}


\begin{remark}
A detailed investigation of the strength of 
\emph{Ellis-Numakura's Lemma} in the hierarchy of weak choice 
principles is found in \cite{ta}.
In particular, in that paper it is shown that
either one of the \emph{Axiom of Multiple Choice} \textsf{MC}
or the \emph{Ultrafilter Theorem} \textsf{UT} (in its equivalent
formulation given by the \emph{Boolean Prime Ideal Theorem} \textsf{BPI})
suffices to prove Ellis-Numakura's Lemma.\footnote
{~\textsf{MC} postulates the existence of a 
``multiple choice'' function for 
every family $\mathcal{A}$ of nonempty sets,
\emph{i.e.}, a function $F$ such that
$F(x)$ is a nonempty finite subset of $x$ for every $x\in\mathcal{A}$.
Recall that \textsf{MC} is equivalent to \textsf{AC}
in \textsf{ZF}, but it is strictly weaker than \textsf{AC} in 
Zermelo--Fraenkel set theory with atoms \textsf{ZFA} (see \cite{hr}).}
(The key point of the proof is the fact that both
\textsf{MC} and \textsf{UT} imply the
existence of a choice function for the family of 
nonempty closed sub-semigroups
of any compact right topological semigroup.)
Recall that, as pointed out in Remark \ref{fil-cl}, 
under the assumption of \textsf{UT}($\N$) (or of \textsf{MC}, since \textsf{MC} $\Rightarrow$ \textsf{UT}($\N$)),
nonempty closed sub-semigroups of $(\beta\N,\oplus)$ exactly
correspond to additive filters, and so one obtains that
either one of \textsf{MC} or \textsf{UT} implies that
every additive filter on $\N$ is extended to an idempotent ultrafilter.
\end{remark}

\medskip
\bibliographystyle{amsalpha}

\end{document}